\providecommand{\keywords}[1]{\textbf{\textit{Keywords:}} #1}
\providecommand{\udk}[1]{\begin{flushleft}\textbf{\textit{UDK }} #1\end{flushleft}}
\newcommand{\CC}{\mathbb{C}}
\newcommand{\ZZ}{\mathbb{Z}}
\newcommand{\PP}{\mathbb{P}}
\newcommand{\FF}{\mathbb{F}}
\newcommand{\Sym}{\mathfrak{S}}
\newcommand{\OOO}{{\mathscr{O}}}
\newcommand{\rk}{\operatorname{rk}}
\newcommand{\Bir}{\operatorname{Bir}}
\newcommand{\Pic}{\operatorname{Pic}}
\newcommand{\Aut}{\operatorname{Aut}}
\newcommand{\Cr}{\operatorname{Cr}}
\newcommand{\PGL}{{\operatorname{PGL}}}
\newcommand{\PSL}{{\operatorname{PSL}}}
\newcommand{\SL}{{\operatorname{SL}}}
\newcommand{\Sp}{{\operatorname{Sp}}}
\newcommand{\Q}{{\operatorname{Q}}}
\newcommand{\PSp}{{\operatorname{PSp}}}
\newcommand{\Tr}{{\operatorname{Tr}}}
\newcommand{\GL}{{\operatorname{GL}}}
\newcommand{\z}{{\operatorname{z}}}
\newcommand{\SM}{{\operatorname{M}}}
\newcommand{\Ort}{{\operatorname{O}}}
\newcommand{\ed}{{\operatorname{ed}}}
\newcommand{\diag}{{\operatorname{diag}}}
\newcommand{\Alt}{{\mathfrak{A}}}
\newcommand{\mumu}{{\boldsymbol{\mu}}}
\newcommand{\xref}[1]{{\rm\ref{#1}}}
\theoremstyle{plain}
\newtheorem{theorem}[subsection]{Theorem}
\newtheorem{lemma}[subsection]{Lemma}
\newtheorem{proposition}[subsection]{Proposition}
\newtheorem{corollary}[subsection]{Corollary}
\newtheorem{scorollary}[equation]{Corollary}
\newtheorem*{claim*}{Claim}
\newtheorem{slemma}[equation]{Lemma}
\theoremstyle{definition}
\newtheorem{definition}[subsection]{Definition}
\newtheorem*{definition*}{Definition}
\newtheorem{example-remark}[subsection]{Remark-Example}
\newtheorem{subexample-remark}[equation]{Remark-Example}
\newenvironment{case*}{\par\smallskip\noindent}{}
\newtheorem{notation}[subsection]{Notation}
\newtheorem*{notation*}{Notation}
\newtheorem{remark}[subsection]{Remark}
\newtheorem{sremark}[equation]{Remark}
\newtheorem{construction}[subsection]{Construction}
\newcounter{NN}
\newcounter{NO}
\title{Quasi-simple finite groups of essential dimension $3$}
\author{Yuri Prokhorov
\thanks{The author was were partially supported by RFBR 15-01-02164 and 15-01-02158
and by the Russian Academic Excellence Project ``5-100''.
}
}
\affil{\small
Steklov Mathematical Institute, Russia
\\
Moscow State Lomonosov 
University, Russia
\\
National Research University Higher School of Economics, Russia
\\
\tt{e-mail: prokhoro@mi.ras.ru}
}
\date{}
\begin{document}\maketitle

\begin{center}
 \it  To the memory of Professor 	
Alfred Lvovich Shmel'kin 
\end{center}
\udk{512.76}
\keywords{essential dimension, group, algebraic variety, representation, Cremona group
}

 \begin{abstract}
We classify quasi-simple finite groups of essential dimension 3.
 \end{abstract}

\section{Introduction}
This paper is based on the author's talk given at the Magadan conference.

Let $G$ be a finite group and let $V$ be a faithful representation of $G$ regarded as 
an algebraic variety.
A \textit{compression} is a $G$-equivariant dominant rational map $V \dashrightarrow X$ 
of faithful $G$-varieties. The
\textit{essential dimension} of $G$, denoted $\ed(G)$, is the minimal dimension of all 
faithful
$G$-varieties $X$ appearing in compressions $V \dashrightarrow X$.
This notion was introduced by J. Buhler and Z. Reichstein \cite{Buhler-Reichstein-1997}
in relation to some classical problems in the theory of polynomials.
It turns out that the essential dimension
depends only on the group $G$, i.e. it does not depend on 
the choice of linear representation $V$ \cite[Theorem 3.1]{Buhler-Reichstein-1997}.

The computation of the essential dimension is a challenging 
problem of algebra and algebraic geometry. 
The finite groups of essential dimension $\le 2$ have been classified (see
\cite{Duncan2013}). Simple finite groups 
of essential dimension $3$ have been recently determined by A. Beauville 
\cite{Beauville2014} (see also \cite{Serre-2008-2009}, \cite{Duncan2010}):

\begin{theorem}
The simple groups of essential dimension $3$ are $\Alt_6$ and possibly
$\PSL_2(11)$.
\end{theorem}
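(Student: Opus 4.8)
The plan is to pin down $\ed(G)$ from both sides, using the birational dichotomy built into the definition together with the classifications of finite subgroups of the Cremona groups and the central-extension structure of the groups involved. The basic observation is that if $\ed(G)\le n$, then an optimal compression $V\dashrightarrow X$ exhibits $G$ as a faithful group of birational automorphisms of an $n$-dimensional \emph{unirational} variety $X$. For $n=2$ every unirational surface is rational (Castelnuovo), so $\ed(G)\le 2$ forces $G\hookrightarrow\Cr_2=\Bir(\PP^2)$; for $n=3$ the variety $X$ is at least rationally connected. First I would invoke Duncan's classification \cite{Duncan2013} of groups of essential dimension $\le 2$: among nonabelian simple groups it yields exactly $\Alt_5$ and $\PSL_2(7)$, both owing their small essential dimension to the accident of possessing a genuine $3$-dimensional complex representation, so that $\CC^3\dashrightarrow\PP^2$ is an honest compression. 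By contrast $\Alt_6$, whose action on $\PP^2$ factors only through the triple cover $3.\Alt_6$, does not appear on that list, whence $\ed(\Alt_6)\ge 3$. Thus any simple group of essential dimension $3$ has $\ed\ge 3$ and is not $\Alt_5$ or $\PSL_2(7)$.

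Next I would impose the necessary condition $\ed(G)\le 3$, which forces a faithful $G$-action on a rationally connected threefold. The relevant input is the classification of finite simple groups admitting such actions (equivalently, the finite simple subgroups of the space Cremona group): the only possibilities are $\Alt_5,\Alt_6,\Alt_7,\PSL_2(7),\PSL_2(8),\PSL_2(11)$ and $\operatorname{PSU}_4(2)\cong\PSp_4(3)$. Deleting the groups already known to have $\ed\le 2$ leaves the shortlist
\[
\Alt_6,\quad \Alt_7,\quad \PSL_2(8),\quad \PSL_2(11),\quad \operatorname{PSU}_4(2)\cong\PSp_4(3).
\]

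For $\Alt_6$ the remaining task is the upper bound $\ed(\Alt_6)\le 3$. My proposed construction uses the $5$-dimensional orthogonal standard representation $V_5$ (the deleted permutation representation of $\mathfrak S_6$ restricted to $\Alt_6$): the associated $\Alt_6$-invariant quadratic form cuts out a smooth quadric threefold $Q^3\subset\PP(V_5)=\PP^4$ which is rational and carries a \emph{genuine} faithful $\Alt_6$-action — crucially no triple cover intervenes here, unlike on the Valentiner $\PP^2$. Producing a dominant $\Alt_6$-equivariant rational map $V_5\dashrightarrow Q^3$ (from covariants, or by checking that $Q^3$ is $\Alt_6$-unirational) then gives $\ed(\Alt_6)\le 3$, and combined with the lower bound above yields $\ed(\Alt_6)=3$. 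For $\PSL_2(11)$ I would projectivize one of its $5$-dimensional faithful irreducible representations; since the group is centerless the action on $\PP^4$ is faithful, so $\ed(\PSL_2(11))\le 4$, while $\ed\ge 3$ as above. Whether one can descend from $\PP^4$ to the $\PSL_2(11)$-invariant Klein cubic threefold — that is, whether this unirational but irrational threefold is $\PSL_2(11)$-unirational and hence realizes $\ed=3$ — is precisely the point I expect to remain unresolved, and is the source of the word ``possibly''.

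The hard part will be eliminating $\Alt_7$, $\PSL_2(8)$ and $\operatorname{PSU}_4(2)$ by showing each has $\ed\ge 4$ despite acting on rational threefolds (e.g.\ $\operatorname{PSU}_4(2)$ on the rational Burkhardt quartic). The elementary-abelian lower bounds of Reichstein--Youssin are useless here, since each of these groups has small $p$-rank; so I would argue geometrically. The strategy is to run the $G$-equivariant minimal model program on an arbitrary faithful action on a rationally connected threefold and examine the resulting $G$-Fano threefolds and $G$-fibrations, showing that in every case a nontrivial obstruction to the existence of a compression survives. This obstruction is cohomological — a class measuring the failure of the projective $G$-action to lift to an honest linear $G$-representation, controlled by the nonsplit central extensions recorded in the Schur multipliers ($2.G$, $3.G$, $6.\Alt_6$, etc.) and detected in $H^2(G,\mathbb{G}_m)$ or a Brauer/index invariant. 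The key technical lemma would assert that $\ed(G)\le\dim X$ fails whenever the faithful $G$-action on $X$ does not lift to a linear action of $G$ itself, so that a surviving obstruction on all minimal models forces $\ed\ge 4$; the contrast with $\Alt_6$ is that there a cover-free model (the quadric $Q^3$) exists, whereas for these three groups it provably does not. Verifying the survival of this obstruction across the full equivariant-MMP case analysis is the main obstacle, and the same obstruction theory is exactly what leaves the status of $\PSL_2(11)$ open. Assembling the upper bound for $\Alt_6$, the lower bounds for the three excluded groups, and the bounds for $\PSL_2(11)$ then gives the stated classification.
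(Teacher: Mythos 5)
First, a caveat about what ``the paper's own proof'' is here: the paper does not prove this statement at all --- it is quoted from Beauville \cite{Beauville2014}, with the $\Alt_6$ and $\Alt_7$ inputs due to Serre \cite{Serre-2008-2009} and Duncan \cite{Duncan2010}; the closest internal analogue is the machinery the paper develops for its quasi-simple theorem (Construction \ref{construction} and Lemma \ref{lemma-G}). Your skeleton --- Duncan's classification in dimension $\le 2$, Prokhorov's list \cite{Prokhorov2009e} of simple groups acting on rationally connected threefolds, then a group-by-group analysis --- is exactly Beauville's, and that part is fine. The genuine gap is your ``key technical lemma'': that $X$ cannot witness $\ed(G)\le\dim X$ whenever the $G$-action on $X$ does not lift to a linear $G$-action, the obstruction being a class in $H^2(G,\CC^*)$. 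This cannot be the engine of the proof, for two concrete reasons. (i) $\PSL_2(8)$ has \emph{trivial} Schur multiplier (the paper itself uses this in the proof of Proposition \ref{Proposition-list-intro}), so every projective action of $\PSL_2(8)$ lifts to an honest linear representation and your obstruction vanishes identically for it --- yet $\ed(\PSL_2(8))\ge 4$ is one of the facts you must prove. (ii) Your assertion that for the excluded groups a ``cover-free model provably does not exist'' is false for $\PSp_4(3)$: it acts on the Burkhardt quartic threefold through a genuine $5$-dimensional linear representation of $\PSp_4(3)$ itself, and that quartic is rational; so the obstruction does not ``survive on all minimal models'', yet $\ed(\PSp_4(3))\ge 4$ must still hold. (In fact the lemma is vacuous in general: the obstruction to linearizing an invariant ample line bundle lies in the finite group $H^2(G,\CC^*)$ and dies after passing to a suitable tensor power, so every finite-group action on a projective variety embeds equivariantly into $\PP(W)$ for a genuine representation $W$.) The correct engine --- used by Duncan for $\Alt_7$, by Beauville for $\PSL_2(8)$ and $\PSp_4(3)$, and by this paper in Lemma \ref{lemma-G} --- is a fixed-point obstruction: if a smooth projective $X$ is a compression target, then every abelian subgroup $A\subset G$ has a fixed point on $X$ (Koll\'ar--Szab\'o / Reichstein--Youssin going down \cite{Kollar-Szabo-2000}), and the tangent representation there is faithful; one then contradicts this by exhibiting an abelian $A$ whose preimage $\bar A$ in the relevant central extension is non-abelian (e.g.\ $\Q_8$ over $\mumu_2\times\mumu_2$), so that $[\bar A,\bar A]$ would have to act trivially, and by checking on Prokhorov's explicit models that no such fixed point exists.

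The same flaw undermines your upper bound for $\Alt_6$: ``genuine faithful action on a rational threefold, no cover intervenes'' does not by itself yield a compression (the Burkhardt quartic again refutes that inference), so the dominant equivariant map $V_5\dashrightarrow Q^3$ is the entire content of the step and cannot be delegated to ``covariants or $G$-unirationality'' --- indeed, unproved, it leaves $\ed(\Alt_6)\le 3$ open, and with it the whole theorem. A construction that does work (cf.\ \cite{Serre-2008-2009}): take the faithful $\Alt_6$-representation $(\CC^2)^{\oplus 6}$ permuting the summands; the induced dominant equivariant map
\[
(\CC^2)^{\oplus 6}\dashrightarrow(\PP^1)^6\dashrightarrow(\PP^1)^6/\!\!/\PGL_2(\CC)
\]
lands on the Segre cubic threefold, a rational threefold on which $\Sym_6$, hence $\Alt_6$, acts faithfully, giving $\ed(\Alt_6)\le 3$. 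Finally, $\Alt_7$ need not be re-proved at all: $\ed(\Alt_7)=4$ is Duncan's theorem \cite{Duncan2010} and can simply be cited. With the lifting lemma replaced by the abelian-fixed-point criterion and an actual construction for $\Alt_6$, your outline becomes Beauville's proof; as written, it would fail precisely at $\PSL_2(8)$, $\PSp_4(3)$, and the $\Alt_6$ upper bound.
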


The essential dimension of $p$-groups was computed by Karpenko and Merkurjev
\cite{Karpenko-Merkurjev-2008}.
In this short note we find all the finite quasi-simple
groups of essential dimension $3$.

\begin{definition}
A group $G$ is said to be \emph{quasi-simple} if 
$G$ is perfect, that is, it equals its  commutator subgroup, and the quotient 
of $G$ by its center  is a simple non-abelian group.
\end{definition}

The main result of this paper is the following.

\begin{theorem}\label{main}
Let $G$ be a finite quasi-simple non-simple group.
If $\ed(G)=2$, then $G\simeq 2.\Alt_5$.
If $\ed(G)=3$, then $G\simeq 3.\Alt_6$.
\end{theorem}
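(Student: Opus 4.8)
The plan is to combine an upper bound coming from explicit low-dimensional faithful representations with a lower bound coming from the birational geometry of the varieties on which $G$ can act. For the upper bounds, recall that a quasi-simple group is perfect, so every faithful representation is a sum of faithful irreducibles on which the (necessarily cyclic) centre acts by scalars, and that $\ed(G)\le\dim V$ for any faithful representation $V$. The binary icosahedral group $2.\Alt_5=\SL_2(5)$ has a $2$-dimensional faithful representation, and the Valentiner group $3.\Alt_6\subset\SL_3(\CC)$ has a $3$-dimensional faithful one, whence $\ed(2.\Alt_5)\le 2$ and $\ed(3.\Alt_6)\le 3$. That these are equalities follows from the classification of groups of essential dimension $\le 2$ \cite{Duncan2013}: the group $2.\Alt_5$ is not one of the finite subgroups of $\PGL_2(\CC)$ (cyclic, dihedral, $\Alt_4$, $\Sym_4$, $\Alt_5$), so $\ed(2.\Alt_5)=2$, and $3.\Alt_6$ does not appear in Duncan's list, so $\ed(3.\Alt_6)=3$.

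The heart of the matter is to show that no other quasi-simple non-simple group has $\ed\le 3$. I would use the standard geometric reformulation: if $\ed(G)\le 3$ then $G$ admits a \emph{generically free} action on a variety $X$ of dimension $\le 3$ which, being a compression target of a linear representation, is unirational and hence rationally connected. Passing to a smooth projective model and running the $G$-equivariant minimal model program produces a $G$-Mori fibre space $X\to S$; in particular $G$ embeds into $\Cr_2(\CC)$ when $\ed(G)\le 2$ and into $\Cr_3(\CC)$ when $\ed(G)\le 3$. The decisive extra constraint, beyond mere embedding into a Cremona group, is that all of $G$ — and therefore its nontrivial centre $Z=Z(G)$ — acts generically freely, which forbids $Z$ from having a large fixed locus and so severely restricts the geometry of $X$.

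I would then invoke the classification of finite quasi-simple subgroups of the plane and space Cremona groups — Dolgachev--Iskovskikh in dimension $2$, and the classification of (quasi-)simple subgroups of $\Cr_3(\CC)$ in dimension $3$, of which Beauville's theorem quoted above (\cite{Beauville2014}) is the simple-group shadow. This reduces matters to a finite list of candidate central extensions $n.S$, essentially the covers of $\Alt_5$, $\Alt_6$, $\Alt_7$ and $\PSL_2(7)$ admitting a low-dimensional faithful projective representation. For each candidate one must either exhibit a generically free action on a rational surface or a rationally connected threefold, or prove that none exists.

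The main obstacle is exactly this elimination step. Groups such as $\SL_2(9)=2.\Alt_6$, $6.\Alt_6$, $2.\Alt_7$, $3.\Alt_7$ and $\SL_2(7)=2.\PSL_2(7)$ already have faithful linear representations of dimension $4$, so the trivial estimate only yields $\ed\le 4$, and ruling out $\ed=3$ must be done geometrically rather than by representation theory. Here the leverage is the action of the centre: a generically free action of $G$ on a rationally connected threefold forces a compatible generically free action of $Z$, and analysing the induced action on the base and fibres of the $G$-Mori fibre space — together with the order of $Z$ and the admissible linearizations at fixed points — should leave only $2.\Alt_5$ and $3.\Alt_6$ standing. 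I expect the most delicate cases to be the threefold conic bundles and del Pezzo fibrations, where the centre may act along the fibres, and these will demand the most careful bookkeeping of fixed loci and stabilizer representations.
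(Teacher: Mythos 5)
Your upper bounds and the $\ed=2$ half are sound, and there they essentially coincide with the paper (which re-proves the needed classification of quasi-simple subgroups of $\Cr_2(\CC)$ in its appendix rather than citing it). The genuine gap is in the $\ed=3$ half, which is the heart of the theorem: eliminating $\SL_2(7)$, $2.\Alt_6$, $6.\Alt_6$ (and $\SL_2(11)$, $\Sp_4(3)$, $n.\Alt_7$) is exactly the step you defer with ``should leave only $2.\Alt_5$ and $3.\Alt_6$ standing,'' and the mechanism you propose cannot close it, for two reasons. First, generic freeness is vacuous leverage here: a faithful action of a \emph{finite} group on an irreducible variety is automatically generically free, so ``$\z(G)$ acts generically freely'' adds nothing beyond faithfulness. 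Second, your plan needs a classification of quasi-simple subgroups of $\Cr_3(\CC)$, i.e.\ of faithful quasi-simple actions on rationally connected threefolds; no such classification exists --- the paper remarks explicitly that this is a much harder open problem, and its Proposition~\ref{Proposition-list-intro} is only a \emph{candidate} list obtained from the simple-group classification plus Schur multipliers. Moreover that would be the wrong dichotomy anyway: $\ed(G)\le 3$ is strictly stronger than $G\hookrightarrow\Cr_3(\CC)$ (for instance $\Alt_7$ acts faithfully on $\PP^3$, yet $\ed(\Alt_7)=4$), so ``exhibit an action on a rationally connected threefold or prove none exists'' neither certifies $\ed\le 3$ nor is it the right target for proving $\ed\ge 4$.

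The missing idea is to exploit the compression structure itself, not merely the existence of an action. The paper (Construction~\ref{construction} and Proposition~\ref{proposition-center}) takes a faithful \emph{irreducible} representation $V$ and a compression $V\dashrightarrow X$ with $\dim X=\ed(G)=3$, and resolves it through the blowup of $0\in V$: the exceptional divisor is $\PP(V)$, on which $\z(G)$ acts trivially (it acts on $V$ by scalars), so its image $B\subset X$ is rationally connected, carries a faithful action of $G/\z(G)$, and satisfies $B\neq X$ because $\z(G)$ acts faithfully on $X$; hence $G/\z(G)$ acts faithfully in dimension $\le 2$, which by the $\Cr_2$ classification forces $G/\z(G)\in\{\Alt_5,\Alt_6,\PSL_2(7)\}$ and cuts the list to $2.\Alt_5$, $\SL_2(7)$, $n.\Alt_6$. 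The same picture then kills the survivors (Lemma~\ref{lemma-G}): $\SL_2(7)$, $2.\Alt_6\simeq\SL_2(9)$, and $6.\Alt_6$ each contain a quaternion subgroup $\bar H\simeq\Q_8$ whose image $H$ in the simple quotient is $\mumu_2\times\mumu_2$; by Koll\'ar--Szab\'o and Lefschetz fixed-point arguments $H$ has a fixed point $P$ on the surface $B$, and at $P$ there is an invariant splitting $T_{P,X}=T_{P,B}\oplus T_1$ with $\dim T_1=1$, on both summands of which $[\bar H,\bar H]\neq\{1\}$ acts trivially (trivially on $T_{P,B}$ because $[\bar H,\bar H]\subset\z(G)$ fixes $B$ pointwise, trivially on $T_1$ because it is a line), contradicting faithfulness on $X$. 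Some argument of this kind, which uses that $X$ is a compression target of a linear representation rather than an abstract rationally connected threefold, is indispensable; without it your outline does not yield the theorem.
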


\begin{notation} 
Throughout this paper the ground field is supposed to be the field of complex numbers $\CC$. 
We employ the following standard notations used in the group theory.

\begin{itemize}
\item 
$\mumu_n$ denotes the multiplicative group of order $n$ (in $\CC^*$),
\item 
$\Alt_n$ denotes the alternating group of degree $n$,
\item 
$\SL_n (q)$ (resp. $\PSL_n (q)$) denotes the special linear group (resp. projective special linear group) over
the finite field $\mathbf F_q$,
\item 
$n.G$ denotes a non-split central extension of $G$ by $\mumu_n$, 
\item 
$\z(G)$ (resp. $[G, G]$) denotes the center (resp. 
the commutator subgroup) of a group $G$.
\end{itemize}
All simple groups are supposed to be non-cyclic.
\end{notation}

\section{Proof of Theorem \ref{main}}
The following assertion is an immediate consequence of the 
corresponding fact for simple groups \cite{Prokhorov2009e}.
\begin{proposition}
\label{Proposition-list-intro}
Let $X$ be a three-dimensional rationally connected 
variety and let $G\subset \Bir(X)$ be a finite quasi-simple non-simple group.
Then $G$ is isomorphic to one of the following:
\begin{equation}
\label{equation-list}
\SL_2(7),\ \SL_2(11),\ \Sp_4(3),\ 
2.\Alt_5,\ 
n.\Alt_6,\ n.\Alt_7\ \text{with $n=2,3,6$.}
\end{equation}
\end{proposition}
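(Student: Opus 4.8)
The plan is to reduce the statement to the classification of finite simple non-abelian subgroups of $\Bir$ of a rationally connected threefold, which is exactly the cited fact \cite{Prokhorov2009e}. First I would regularize the action: given a finite subgroup $G\subset\Bir(X)$ there is a smooth projective variety $X'$ with a biregular faithful $G$-action and a $G$-equivariant birational map $X'\dashrightarrow X$ (take an equivariant resolution of the normalized graph of the $G$-action). Since both rational connectedness and dimension are birational invariants, $X'$ is again a three-dimensional rationally connected variety carrying a regular faithful $G$-action.

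Since $G$ is quasi-simple and non-simple, its center $Z:=\z(G)$ is non-trivial and $S:=G/Z$ is simple non-abelian. I would then pass to the quotient $Y:=X'/Z$. This is a projective threefold, and being dominated by the rationally connected variety $X'$ it is itself rationally connected; moreover $S=G/Z$ acts biregularly on $Y$. The one point requiring care is \emph{faithfulness} of this induced action. The kernel $K$ of $G\to\Bir(Y)$ contains $Z$, and $K/Z$ is normal in the simple group $S$, so either $K=Z$ or $K=G$. If $K=G$, then a general element $g\in G$ sends a general point $x\in X'$ into its own $Z$-orbit; writing $g(x)=\zeta_g\cdot x$ with $\zeta_g\in Z$, which is forced to be constant by irreducibility of $X'$, gives $g=\zeta_g$ in $\Bir(X')$ and hence $G=Z$ is abelian, a contradiction. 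Therefore $K=Z$ and $S\hookrightarrow\Bir(Y)$ faithfully.

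By \cite{Prokhorov2009e} the simple group $S$ is then isomorphic to one of
$\Alt_5,\ \Alt_6,\ \Alt_7,\ \PSL_2(7),\ \PSL_2(8),\ \PSL_2(11),\ \PSp_4(3)$.
Finally, a non-simple quasi-simple group with simple quotient $S$ is a proper central quotient $n.S$ of the universal (Schur) cover of $S$, so that $n>1$ divides the order of the Schur multiplier $\M(S)$. Reading off these multipliers completes the proof: they are $\mumu_2$ for $\Alt_5$, $\PSL_2(7)$, $\PSL_2(11)$ and $\PSp_4(3)$, giving $2.\Alt_5$, $\SL_2(7)$, $\SL_2(11)$ and $\Sp_4(3)$; they are $\mumu_6$ for $\Alt_6$ and $\Alt_7$, giving $n.\Alt_6$ and $n.\Alt_7$ for $n=2,3,6$; and the multiplier of $\PSL_2(8)$ is trivial, so $\PSL_2(8)$ contributes no non-simple quasi-simple extension and is discarded. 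This yields exactly the list \eqref{equation-list}.

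The whole argument is light once the simple-group classification is granted, so the main obstacle is not geometric but organizational: verifying that the quotient by $Z$ preserves rational connectedness and, above all, that the residual $S$-action on $Y$ is faithful. After that, the enumeration is pure group theory, the only substantive input being the list of Schur multipliers, where the vanishing of $\M(\PSL_2(8))$ is what removes a spurious case.
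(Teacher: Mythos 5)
Your proposal is correct and follows essentially the same route as the paper: regularize the action, pass to the quotient $Y=X'/\z(G)$, invoke the classification of simple groups acting on rationally connected threefolds from \cite{Prokhorov2009e}, and then enumerate the quasi-simple central extensions via Schur multipliers, discarding $\PSL_2(8)$ because its multiplier is trivial. The only difference is that you spell out the faithfulness of the induced $G/\z(G)$-action on $Y$ (via the kernel being normal in a simple group and the orbit argument), a point the paper's proof takes for granted.
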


\begin{proof}
We may assume that $G$ biregularly (and faithfully) acts on $X$.
Let $Y:= X/\z(G)$ and $G_1:= G/\z(G)$. Then $Y$ is a three-dimensional rationally connected variety 
acted on by a finite simple group $G_1$. Then according to \cite{Prokhorov2009e}
$G_1$ belongs to the following list:
\begin{equation}
\label{equation-simple-list}
\PSL_2(7),\ \PSL_2(8),\ \PSL_2(11),\ \PSp_4(3),\ 
\Alt_5,\ 
\Alt_6,\ \Alt_7.
\end{equation}
Since $G_1$ is perfect, there exists the\textit{ universal covering group}
$\tilde G_1$, that is, a central extension of $G_1$ such that
for any other extension $\hat G_1$
there is a unique homomorphism $\tilde G_1\to \hat G_1$ of central extensions 
(see e.g. \cite[\S 11.7, Theorem 7.4]{Karpilovsky-rep-2}). The kernel of $\tilde G_1\to G_1$
is the Schur multiplier $\SM(G_1)=H^2(G_1,\CC^*)$ of $G_1$.
Thus $G$ is uniquely (up to isomorphism)
determined by $G/\z(G)$ and the homomorphism $ \SM(G_1)\to \z(G)$.
It is known that $\SM(G_1)\simeq \mumu_2$ in all the cases \eqref{equation-simple-list}
except for $\Alt_6$ and $\Alt_7$ where the Schur multiplier is isomorphic to
$\mumu_6$, and $\PSL_2(8)$ where the Schur multiplier 
is trivial (see \cite[\S 12.3, Theorem 3.2, \S 16.3, Theorem 3.2]{Karpilovsky-rep-2}
and \cite{atlas}). This gives us the list \eqref{equation-list}.
\end{proof}

\begin{sremark}
We do not assert that all the possibilities  \eqref{equation-list} occur.
Using the technique developed in the works \cite{Prokhorov2009e},
\cite{Prokhorov-Shramov-p-groups}, \cite{Prokhorov-Shramov-J-const}, \cite{Prokhorov-Shramov-3folds} 
it should be possible to obtain a complete classification 
of actions of quasi-simple groups on rationally connected threefolds.    
However, this is much more difficult problem.
\end{sremark}

\begin{scorollary}\label{corollary-representations}
Let $G$ be a group satisfying the conditions of \xref{Proposition-list-intro}.
Then $G$ has an irreducible faithful representation.
The minimal dimension of such a representation $V$ is given by the 
following table \textup{\cite{atlas}}:
\setlength{\extrarowheight}{5pt}
\par\medskip\noindent
\scalebox{0.95}{
\begin{tabular}{c|c|c|c|c}
$G$& $2.\Alt_5$& $3.\Alt_6$ & 
$\SL_2(7),\, \Sp_4(3),\, 2.\Alt_6,\,2.\Alt_7$ 
&
$\SL_2(11),\, 6.\Alt_6,\, 3.\Alt_7,\, 6.\Alt_7$
\\\hline
$\dim V$ & $2$ &$3$& $4$ &$6$
\end{tabular}
}
\end{scorollary}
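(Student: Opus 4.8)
The plan is to reduce the whole statement to a fact about central characters and then extract the numbers from the character tables. First I would record the structural observation, already implicit in the proof of Proposition~\ref{Proposition-list-intro}, that in every case of \eqref{equation-list} the center $\z(G)$ is a cyclic group $\mumu_n$ (with $n=2$ for $\SL_2(7),\SL_2(11),\Sp_4(3),2.\Alt_5$, and $n\in\{2,3,6\}$ for the covers of $\Alt_6,\Alt_7$); indeed $G/\z(G)$ is simple non-abelian, hence centerless, so $\z(G)$ is forced to coincide with the central $\mumu_n$ of the extension. Now fix an irreducible $\rho\colon G\to\GL(V)$. By Schur's lemma $\z(G)$ acts by scalars, so $\rho|_{\z(G)}$ is a single character $\omega$ of $\z(G)$, the \emph{central character} of $\rho$. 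The key claim is that $\rho$ is faithful if and only if $\omega$ is faithful. One direction is immediate, since a faithful $\rho$ restricts faithfully to $\z(G)$. For the converse, suppose $\omega$ is injective and put $K=\ker\rho$; then $K\trianglelefteq G$ and $K\cap\z(G)=\ker\omega=1$, so the image of $K$ in the simple group $G/\z(G)$ is normal, hence trivial or everything. If it were everything, then $G=K\cdot\z(G)$ with $K\cap\z(G)=1$ gives $G=K\times\z(G)$, so $G$ would have the nontrivial abelian quotient $\z(G)$, contradicting $G=[G,G]$. Hence $K\subseteq\z(G)$ and therefore $K=K\cap\z(G)=1$.

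Next I would dispose of existence and reduce the dimension count to a table lookup. Since $\z(G)\cong\mumu_n$ is cyclic it admits a faithful character $\omega$, and the block idempotent $e_\omega=\tfrac1n\sum_{z\in\z(G)}\omega(z)^{-1}z$ cuts out a nonzero semisimple direct factor of $\CC[G]$; any simple module over this factor is an irreducible representation with central character $\omega$, which by the previous paragraph is faithful. This already proves that $G$ has a faithful irreducible representation. The same equivalence shows that the faithful irreducible representations of $G$ are precisely those whose central character is one of the $\varphi(n)$ faithful characters of $\mumu_n$, i.e.\ those on which a fixed generator of $\z(G)$ acts by a primitive $n$-th root of unity. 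Their minimal dimension can therefore be read off directly from the irreducible character degrees listed in \cite{atlas}, and carrying this out for each group in \eqref{equation-list} yields the stated table.

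The only point that needs genuine care—and the place where a careless estimate goes wrong—is distinguishing representations with \emph{faithful} central character from those merely nontrivial on a proper subgroup of $\z(G)$. For example, $\SL_2(11)$ possesses $5$-dimensional irreducible representations, but these are inflated from the two $5$-dimensional representations of $\PSL_2(11)$ and hence are trivial on $\z(G)=\mumu_2$; the smallest \emph{faithful} representations have dimension $6$ (the pieces of the Weil representation on which $-I$ acts by $-1$), which is why the table records $6$ rather than $5$. The analogous bookkeeping is required for the covers $n.\Alt_6$ and $n.\Alt_7$: one must follow the action of the generator of $\mumu_n$ and discard every character that factors through a proper quotient $n'.\Alt_m$ with $n'\mid n$, after which the minimal faithful degrees come out as $2,3,4,6$ exactly as tabulated. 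I expect no real obstacle beyond this tracking of central characters, since the complete character tables are available in \cite{atlas}.
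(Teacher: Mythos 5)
Your proposal is correct, and it is more self-contained than what the paper actually does: the paper offers no argument at all for this corollary, treating both the existence of a faithful irreducible representation and the minimal degrees as a direct lookup in \cite{atlas}. Your route supplies the missing structure. The observation that $\z(G)$ equals the central $\mumu_n$ of the extension (because $G/\z(G)$ is centerless), the equivalence ``$\rho$ faithful $\Leftrightarrow$ its central character is faithful'' (whose converse correctly uses normality of $\ker\rho$, simplicity of $G/\z(G)$, and perfectness of $G$ to exclude the splitting $G=K\times\z(G)$), and the existence argument via the central idempotent $e_\omega$ are all sound, and they reduce the corollary to reading off, for each group, the minimal degree of an irreducible character whose restriction to $\z(G)$ is a \emph{faithful} character of $\mumu_n$. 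What your approach buys is precisely the safeguard you point out: a naive reading of the smallest nontrivial character degree would give $5$ for $\SL_2(11)$ (and similar undercounts for the covers of $\Alt_6$, $\Alt_7$), since those representations factor through proper quotients; your central-character bookkeeping makes it transparent why the table records $6$ there, whereas the paper leaves this entirely to the reader's careful use of the ATLAS. The only residual reliance on \cite{atlas} in your argument --- the actual character degrees and the action of the center on each character --- is unavoidable and is exactly what the paper cites as well.
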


\begin{construction}\label{construction}
Let $G$ be a finite quasi-simple non-simple group having a faithful irreducible representation $V$.
Let $\psi: V \dashrightarrow X$
be a compression with $\dim X=\ed(G)$. Applying an equivariant resolution of singularities 
(see \cite{Abramovich-Wang}) we may assume that $X$ is smooth (and projective). 
Furthermore, consider the compactification $\PP:=\PP(V\oplus \CC)$ and let 
$X\overset{\varphi}\longleftarrow Y\overset{f}\longrightarrow \PP\supset V$
be an equivariant resolution of $\psi$, where $f$ is a birational morphism and $Y$ is smooth and projective. 
We also may assume that $f$ is passed through 
the blowup $\tilde f: \tilde \PP\to \PP$ of $0\in V\subset \PP$. Let $\tilde E\subset \tilde V$ 
be the $\tilde f$-exceptional divisor
and let $E\subset Y$ be its proper transform, and let $B:=\varphi(E)$.
Thus we have the following $G$-equivariant diagram:
\vspace{10pt}
\[
\vcenter{
\xymatrix{
&E\ar[dl]\ar@{}[r]|-*[@]{\subset}\ar@/^16pt/[rrr]&Y\ar[dr]^{f}\ar[dl]_{\varphi}\ar[r]& 
\tilde \PP\ar[d]^{\tilde f}\ar@{}[r]|-*[@]{\supset}&\tilde E\ar[dr]
\\
B\ar@{}[r]|-*[@]{\subset}&X&&\PP\ar@{-->}[ll]_{\psi}\ar@{}[r]|-*[@]{\supset}&V\ar@{}[r]|-*[@]{\ni}&0
} 
}
\]
The action of $\z(G)$ on $\tilde E\simeq \PP(V)$ and on $E$ is trivial
because $V$ is an irreducible representation.
Hence, $G/\z(G)$ faithfully acts on $E$.
By assumption the action of $\z(G)$ on $X$ is
faithful.  
Hence  $B\neq X$ and so 
$B$ is a rationally connected variety of dimension $<\ed(G)$. 
\end{construction}

\begin{proposition}\label{proposition-center}
Let $G$ be a finite quasi-simple non-simple group
having a faithful irreducible representation. 
Then $G/\z(G)$ acts faithfully of a rationally connected variety 
of dimension $<\ed(G)$.
\end{proposition}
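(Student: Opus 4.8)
The plan is to build directly on Construction \ref{construction}, which already supplies most of the geometry: writing $G_1 := G/\z(G)$, it produces a faithful action of $G_1$ on the divisor $E$, which is the proper transform of $\PP(V) \cong \PP^{\dim V - 1}$ and hence rational (so rationally connected), together with a $G_1$-equivariant morphism $\varphi|_E \colon E \to B$ onto a rationally connected variety $B$ with $\dim B < \ed(G)$. (Since $\z(G)$ acts trivially on $E$ it acts trivially on $B = \varphi(E)$, so the $G$-action on $B$ genuinely descends to $G_1$.) The preparatory step is to fix $V$ to be a faithful irreducible representation of minimal dimension $d = \dim V$ and to record the elementary fact that $d$ is the minimal dimension of \emph{any} faithful representation of $G$: a faithful representation must have an irreducible constituent $W$ on which $\z(G)$ acts by a faithful character, and since every normal subgroup of the quasi-simple group $G$ is either central or equal to $G$, the (central, proper) subgroup $\ker(W)$ meets $\z(G)$ trivially and is therefore trivial, so $W$ is faithful of dimension $\geq d$. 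In particular $\ed(G) \leq d$.

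Next I would split according to the kernel of the action of $G_1$ on $B$. This kernel is a normal subgroup of the simple group $G_1$, hence trivial or all of $G_1$. In the first case $G_1$ acts faithfully on $B$, and $B$ is exactly the rationally connected variety of dimension $< \ed(G)$ that is required. In the second case $G_1$, and therefore $G$ (as $\z(G)$ also acts trivially), acts trivially on $B$, so $G$ fixes every point $b \in B$.

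The second case I would then close by linearization at such a fixed point $b$. Because $X$ is smooth and irreducible (arranged in Construction \ref{construction}) and $G$ is finite and fixes $b$, the induced representation of $G$ on $T_b X$ is faithful: any $g$ acting trivially on $T_b X$ is trivial in a neighbourhood of $b$, hence on all of $X$. This gives a faithful representation of $G$ of dimension $\dim X = \ed(G)$, so $\ed(G) \geq d$; combined with $\ed(G) \leq d$ this forces $\ed(G) = d$, whence $\dim E = d - 1 = \ed(G) - 1 < \ed(G)$ and the rationally connected divisor $E$, on which $G_1$ already acts faithfully, is the desired variety. The step demanding the most care is precisely securing faithfulness on a variety of small enough dimension: the geometrically natural image $B$ need not carry a faithful action, and the point of the argument is that whenever the $B$-action degenerates, linearization pushes $\ed(G)$ up to its maximal value $d$, at which point the a priori larger divisor $E$ becomes small enough to use in its place.
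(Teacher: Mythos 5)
Your overall architecture is close in spirit to the paper's: both proofs run Construction \ref{construction} and split into a case where $B$ carries a faithful $G/\z(G)$-action (where one is done) and a degenerate case in which $G$ acquires a fixed point on the smooth variety $X$, handled by linearization. The paper's dichotomy is on whether $G$ has a fixed point on $X$, yours is on the kernel of the $G/\z(G)$-action on $B$; these are interchangeable. The genuine divergence---and the genuine gap---is in what you do with the fixed point. The paper decomposes $T_{P,X}$ into irreducible summands, picks a non-trivial summand $T_1$, and notes that $G/\z(G)$ acts faithfully on $\PP(T_1)$ (the kernel of $G\to\PGL(T_1)$ is a proper normal subgroup of the quasi-simple group $G$ containing $\z(G)$, hence equal to $\z(G)$), which is rational of dimension $<\ed(G)$; this needs no comparison of $\ed(G)$ with representation dimensions. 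You instead try to deduce $\ed(G)\ge d$ from the faithful representation $T_bX$, and for that you need your ``elementary fact'' that every faithful representation of $G$ has dimension at least $d$, the minimal dimension of a faithful \emph{irreducible} representation.

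That fact is where your proof breaks. The justification---``a faithful representation must have an irreducible constituent $W$ on which $\z(G)$ acts by a faithful character''---is false when $\z(G)$ is cyclic of non-prime-power order: faithfulness only forces the central kernels of the constituents to intersect trivially, not any single one to vanish. This is not a hypothetical worry here: for $G=6.\Alt_6$, which appears in the list \eqref{equation-list}, the direct sum of a $4$-dimensional faithful representation of $2.\Alt_6$ and a $3$-dimensional faithful representation of $3.\Alt_6$, both inflated to $G$, is faithful (the central kernels $\mumu_3$ and $\mumu_2$ of the two summands meet trivially), yet no constituent restricts faithfully to the center $\mumu_6$. So the lemma as you prove it is wrong, and whether the numerical conclusion (minimal faithful dimension equals minimal faithful irreducible dimension) holds for \emph{every} quasi-simple group would require a separate argument; it happens to hold for the groups in the paper's list by character-table inspection, but the proposition, and your proof of it, concerns arbitrary quasi-simple $G$. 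The repair is available but it is exactly the paper's step: at the fixed point $b$, take a non-trivial irreducible constituent $T_1\subset T_bX$ and use $\PP(T_1)$ as the target variety, which makes the detour through $E$ and the bound $\ed(G)\ge d$ unnecessary.
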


\begin{proof}
Let $V$ be a faithful irreducible representation of $G$ and 
let $\psi : V\dashrightarrow X$ be a compression with $\dim X=\ed(G)$.
Apply the construction \ref{construction}.
Assume that $G$ has a fixed point $P\in X$. Then $G$ has faithful representation 
on the tangent space $T_{P,X}$. Let $T_{P,X}=\oplus T_i$ 
be the decomposition in irreducible components. At least one of them,
say $T_1$ is non-trivial. Then $G/\z(G)$ faithfully acts on $\PP(T_1)$,
where $\PP(T_1)< \dim X=\ed(G)$. 
Thus we may assume that $G$ has no fixed points on $X$.
By the construction \ref{construction} the variety $B$ is  rationally connected
and $\dim B<\ed(G)$. 
Since $G$ has no fixed points on $B$ and the group $G/\z(G)$ is simple, its action on $B$ must be effective.
\end{proof}

Comparing the list \eqref{equation-list} with Theorem \ref{theorem-2-dimensional-simple}
we obtain the following.

\begin{corollary}
Let $G$ be a finite quasi-simple non-simple group with
$\ed(G)\le 3$. 
Then for $G$ we have one of the following possibilities:
\begin{equation}
\label{equation-list-2}
\SL_2(7),\qquad n.\Alt_6\ \text{with $n=2,3,6$.}
\end{equation}
\end{corollary}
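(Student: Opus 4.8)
The plan is to chain together the two reductions already established in the excerpt and then read off the answer from a single known classification in dimension two. First I would record that the hypothesis $\ed(G)\le 3$ produces a compression $V\dashrightarrow X$ of a faithful representation $V$ into a faithful $G$-variety $X$ with $\dim X\le 3$, and that $X$ is rationally connected because it is dominated by the rational variety $V$. After replacing $X$ by $X\times\PP^{3-\dim X}$ (with $G$ acting trivially on the second factor) if necessary, $G$ acts faithfully on a three-dimensional rationally connected variety, so $G\subset\Bir(X)$ and Proposition \ref{Proposition-list-intro} places $G$ in the list \eqref{equation-list}. This confines $G/\z(G)$ to the seven simple groups of \eqref{equation-simple-list}.

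The decisive step is Proposition \ref{proposition-center}: it gives a faithful action of the simple group $G/\z(G)$ on a rationally connected variety $B$ with $\dim B<\ed(G)\le 3$, hence $\dim B\le 2$. A rationally connected variety of dimension $\le 2$ is rational (the curve case gives $\PP^1$; the surface case is rational by Castelnuovo's criterion), so $G/\z(G)$ is a finite simple non-abelian subgroup either of $\PGL_2(\CC)=\Aut(\PP^1)$ or of the plane Cremona group $\Cr_2(\CC)=\Bir(\PP^2)$; note $\dim B=0$ is impossible since $G/\z(G)$ is nontrivial. I would then invoke Theorem \ref{theorem-2-dimensional-simple} (the classification of simple subgroups of $\Cr_2$, the icosahedral group $\Alt_5$ being the only non-abelian simple subgroup of $\PGL_2$) to conclude that $G/\z(G)$ is one of $\Alt_5$, $\Alt_6$, $\PSL_2(7)$. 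In particular $\PSL_2(11)$, $\PSp_4(3)$ and $\Alt_7$ are eliminated, because none of them acts on a rationally connected surface.

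Finally I would match these admissible quotients against \eqref{equation-list}, where the central-extension data is pinned down by the Schur multiplier as in Proposition \ref{Proposition-list-intro}: the quotient $\PSL_2(7)$ forces $G\simeq\SL_2(7)$, the quotient $\Alt_6$ forces $G\simeq n.\Alt_6$ with $n\in\{2,3,6\}$, and the quotient $\Alt_5$ forces $G\simeq 2.\Alt_5$. The group $2.\Alt_5$ has a faithful two-dimensional representation by Corollary \ref{corollary-representations}, hence $\ed(2.\Alt_5)=2$; it is the essential-dimension-two group already isolated in Theorem \ref{main}, so the groups remaining as candidates for essential dimension $3$ are exactly those in \eqref{equation-list-2}. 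The one genuinely substantial ingredient here is Theorem \ref{theorem-2-dimensional-simple}, the classification of finite simple subgroups of the plane Cremona group; once it is available, the argument is purely organizational, the only mild care being the low-dimensional edge cases $\dim B\le 1$ and the bookkeeping of central extensions.
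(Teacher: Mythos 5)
Your proposal is correct and takes essentially the same route as the paper, whose one-sentence proof likewise chains Proposition \ref{Proposition-list-intro}, Proposition \ref{proposition-center} (applicable because the groups in \eqref{equation-list} have faithful irreducible representations by Corollary \ref{corollary-representations}), and Theorem \ref{theorem-2-dimensional-simple}. Your explicit treatment of $2.\Alt_5$ is in fact more careful than the paper's own statement: as literally written (with $\ed(G)\le 3$) the corollary omits $2.\Alt_5$, which does satisfy the hypothesis since $\ed(2.\Alt_5)=2$, and your reading of \eqref{equation-list-2} as the surviving candidates for essential dimension $3$ is exactly the interpretation that the paper's subsequent lemma and the proof of Theorem \ref{main} require.
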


Now we consider the possibilities of \eqref{equation-list-2}
case by case.
\begin{lemma}
$\ed(2.\Alt_5)=2$ and $\ed(3.\Alt_6)=3$.
\end{lemma}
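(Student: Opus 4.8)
The plan is to establish matching upper and lower bounds for each group. The upper bounds are immediate from Corollary \ref{corollary-representations}. Since $2.\Alt_5$ has a faithful (irreducible) representation $V$ of dimension $2$, the identity map $V\dashrightarrow V$ is itself a compression, so $\ed(2.\Alt_5)\le 2$; likewise the $3$-dimensional faithful representation of $3.\Alt_6$ yields $\ed(3.\Alt_6)\le 3$. It remains to prove the reverse inequalities $\ed(2.\Alt_5)\ge 2$ and $\ed(3.\Alt_6)\ge 3$.

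For the lower bounds I would argue by contradiction, using Proposition \ref{proposition-center}. Both $2.\Alt_5$ and $3.\Alt_6$ are finite quasi-simple non-simple groups possessing a faithful irreducible representation, so the proposition applies: if $G$ is either group, then the simple quotient $G/\z(G)$ acts faithfully on a rationally connected variety $B$ with $\dim B<\ed(G)$. For $G=2.\Alt_5$ we have $\ed(G)\ge 1$ (the group is nontrivial), so it suffices to rule out $\ed(G)=1$; in that case $\Alt_5$ would act faithfully on a rationally connected variety of dimension $<1$, that is, on a point, which is impossible for a nontrivial group. Hence $\ed(2.\Alt_5)=2$.

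For $G=3.\Alt_6$ I would suppose $\ed(G)\le 2$ and derive a contradiction. Then $\Alt_6$ acts faithfully on a rationally connected variety $B$ with $\dim B\le 1$. The case $\dim B=0$ is excluded as above. If $\dim B=1$, then $B\cong\PP^1$, the unique rationally connected curve, and the faithful action gives an embedding $\Alt_6\hookrightarrow\Aut(\PP^1)=\PGL_2(\CC)$. By Klein's classification the finite subgroups of $\PGL_2(\CC)$ are cyclic, dihedral, $\Alt_4$, $\Sym_4$ and $\Alt_5$, and $\Alt_6$ is none of these. This contradiction shows $\ed(3.\Alt_6)\ge 3$, and combined with the upper bound gives $\ed(3.\Alt_6)=3$.

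The argument is short because the substantive geometric work is already packaged into Proposition \ref{proposition-center}, which descends the problem to a faithful action of the simple quotient in strictly smaller dimension. The only real step is then classifying faithful actions of $\Alt_5$ and $\Alt_6$ in dimensions $0$ and $1$, so the main point to get right is the reduction to the curve case and the invocation of the list of finite subgroups of $\PGL_2(\CC)$; no serious obstacle arises beyond this standard input.
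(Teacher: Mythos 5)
Your proof is correct and follows essentially the same route as the paper: upper bounds from the faithful irreducible representations of dimensions $2$ and $3$, and lower bounds via Proposition \ref{proposition-center} together with the fact that $\Alt_5$ cannot act faithfully on a point and $\Alt_6$ cannot act faithfully on a rational curve (i.e.\ does not embed in $\PGL_2(\CC)$). The paper's proof is just a terser version of the same argument, proving the $3.\Alt_6$ case explicitly and leaving $2.\Alt_5$ to the reader.
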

\begin{proof}
Let us prove, for example, the second equality.
Since $3.\Alt_6$ has a faithful there-dimensional representation,
$\ed(3.\Alt_6)\le 3$. On the other hand, $\Alt_6$ cannot effectively act on 
a rational curve. Hence, by Proposition \ref {proposition-center} \ $\ed(3.\Alt_6)\ge 3$.
\end{proof}

\begin{slemma}\label{lemma-G}
Let $G$ be a quasi-simple non-simple group.
Assume that $G\not\simeq 2.\Alt_5,\, 3.\Alt_6$. Assume also that 
$G$ contains a subgroup $\bar H$ 
such that
\begin{enumerate}
\item \label{lemma-G-1}
$\bar H$ is not abelian but its image $H\subset G/\z(G)$ is abelian,
\item \label{lemma-G-2}
for any action of $G/\z(G)$ on a rational projective surface 
the subgroup $H\subset G/\z(G)$ has a fixed point. 
\end{enumerate}
Then $\ed (G)\ge 4$.
\end{slemma}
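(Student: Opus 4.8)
The plan is to argue by contradiction: assuming $\ed(G)\le 3$, I will produce a faithful representation of $\bar H$ of dimension at most $3$ with abelian image, contradicting condition \ref{lemma-G-1}. First I would invoke Proposition \ref{proposition-center}: $G_1:=G/\z(G)$ acts faithfully on a rationally connected variety $B$ of dimension $<\ed(G)\le 3$. Since $G_1$ is simple non-abelian and, because $G\not\simeq 2.\Alt_5$ is non-simple, $G_1\not\simeq\Alt_5$, the group $G_1$ cannot act faithfully on a point or on a rational curve (the only simple non-abelian subgroup of $\PGL_2(\CC)$ is $\Alt_5$). Hence $\dim B=2$ and $\ed(G)=3$; in particular the smooth projective model $X$ of Construction \ref{construction} is a threefold, and $B=\varphi(E)\subset X$ is a $G$-invariant irreducible surface on which $\z(G)$ acts trivially.

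Next I would split according to whether $G$ has a fixed point on $X$. If $G$ fixes a point $P\in X$, then $G$ embeds into $\GL(T_{P,X})$, a faithful representation of dimension $\le 3$. But under the assumption $\ed(G)\le 3$ the group $G$ lies in the list \eqref{equation-list-2}, and since $G\not\simeq 3.\Alt_6$ it is one of $\SL_2(7),\,2.\Alt_6,\,6.\Alt_6$; by Corollary \ref{corollary-representations} none of these admits a faithful representation of dimension $\le 3$ (for $\SL_2(7)$ and $2.\Alt_6$ a faithful representation must contain a faithful irreducible summand, of dimension $4$; for $6.\Alt_6$ it must contain a summand faithful on the order-$2$ part of the centre, again of dimension $\ge 4$). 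This contradiction shows $G$ has no fixed point on $X$; then, as in the proof of Proposition \ref{proposition-center}, the $G_1$-action on $B$ is faithful, so in particular $\dim B=2$ is confirmed.

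Now I would pass to a $G_1$-equivariant resolution $S\to B$ with $S$ a smooth projective rational surface and apply hypothesis \ref{lemma-G-2} to the $G_1$-action on $S$, obtaining a fixed point of $H$. Its image $Q\in B\subset X$ is fixed by $H$, and since $\z(G)$ acts trivially on $B$ and $Q\in B$, it is fixed by all of $\bar H$. As $X$ is smooth, $\bar H$ acts faithfully on the tangent space $T_{Q,X}$ (of dimension $3$), and hence so does the centre $\z(G)\subset\bar H$. The crucial point is the invariant surface: $T_{Q,B}\subset T_{Q,X}$ is an $\bar H$-invariant subspace on which $\z(G)$ acts trivially, so $T_{Q,B}\ne T_{Q,X}$, whence $\dim T_{Q,B}\le 2$; combined with $\dim T_{Q,B}\ge\dim B=2$ this forces $\dim T_{Q,B}=2$ (and, incidentally, shows $B$ is smooth at $Q$).

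Finally I would analyse this $3$-dimensional representation. On the $2$-dimensional invariant subspace $T_{Q,B}$ the group $\bar H$ acts through the abelian quotient $H=\bar H/\z(G)$, so $T_{Q,B}$ is a sum of two characters; by Maschke's theorem $T_{Q,X}=T_{Q,B}\oplus N$ with $\dim N=1$, so $N$ is a character too. Thus $\bar H$ acts on $T_{Q,X}$ as a sum of three characters, i.e.\ with abelian image, contradicting the faithfulness of $\bar H\hookrightarrow\GL(T_{Q,X})$ together with condition \ref{lemma-G-1} that $\bar H$ is non-abelian; therefore $\ed(G)\ge 4$. The main obstacle is this tangent-space step: one must guarantee that the invariant surface $B$ produces a genuinely $2$-dimensional $\z(G)$-fixed $\bar H$-subrepresentation of $T_{Q,X}$, and the delicate part is that the smoothness of $B$ at $Q$ is not assumed but extracted from the faithfulness of the central action, which is precisely what keeps $T_{Q,B}$ from exhausting $T_{Q,X}$.
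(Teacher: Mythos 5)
Your proof is correct and is essentially the paper's own argument: assuming $\ed(G)\le 3$, you rule out $G$-fixed points on $X$ and a low-dimensional $B$, apply hypothesis \ref{lemma-G-2} to produce a point $Q\in B$ fixed by all of $\bar H$, and reach a contradiction because $\bar H$ acts on the three-dimensional space $T_{Q,X}$ faithfully yet as a sum of characters; if anything you are more careful than the paper, which silently assumes $B$ is smooth at the fixed point when it writes $T_{P,X}=T_{P,B}\oplus T_1$ with $\dim T_1=1$, whereas you extract that smoothness from the faithfulness of the central action. The one slip is the parenthetical claim $\z(G)\subset\bar H$, which need not hold (e.g.\ for $G=6.\Alt_6$ with $\bar H\simeq \Q_8$), but it is harmless: $\z(G)$ fixes $Q$ because it fixes $B$ pointwise, so it acts faithfully on $T_{Q,X}$ by the same standard fact you invoke for $\bar H$ (alternatively, run your dimension count with $\{1\}\neq[\bar H,\bar H]\subset\z(G)\cap\bar H$ in place of $\z(G)$).
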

\begin{proof}
Since $\bar H/(\z(G)\cap \bar H)=H$, 
we have $\z(G)\cap \bar H\supset [\bar H,\bar H]$ and $[\bar H,\bar H]\neq \{1\}$
(because $\bar H$ is not abelian).
Assume that $\ed (G)=3$.
Apply construction \ref{construction}.
From the list \eqref{equation-list} one can see that $G/\z(G)$ 
cannot faithfully act on a rational curve and 
by Corollary \ref{corollary-representations}\ $G$ has no fixed points on $X$.
Hence, $B$ is a (rational) surface.
By \ref{lemma-G-2}
the group $\bar H$ has a fixed point, say $P$, on $B\subset X$. There is an invariant 
decomposition $T_{P, X}=T_{P,B}\oplus T_1$, where $\dim T_1=1$.
The action of $[\bar H,\bar H]$ on $T_{P,B}$ and $T_1$ is trivial.
Hence it is trivial on $T_{P, X}$ and $X$, a contradiction. 
\end{proof}

\begin{proposition}\label{proposition-SL27}
$\ed(\SL_2(7))= 4$.
\end{proposition}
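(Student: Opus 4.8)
The plan is to show $\ed(\SL_2(7)) = 4$ by combining an upper bound from the faithful four-dimensional representation with a lower bound obtained by applying Lemma \ref{lemma-G}. We know from Corollary \ref{corollary-representations} that $\SL_2(7)$ has a faithful irreducible four-dimensional representation $V$, so $\ed(\SL_2(7)) \le 4$ immediately. The entire difficulty is therefore in proving $\ed(\SL_2(7)) \ge 4$, equivalently in ruling out $\ed(\SL_2(7)) = 3$ (the value cannot be $\le 2$ since $\PSL_2(7)$ cannot act on a rational curve, as noted in the list \eqref{equation-list}).

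To obtain the lower bound I would invoke Lemma \ref{lemma-G} with $G = \SL_2(7)$, whose center is $\z(G) \simeq \mumu_2$ and whose quotient is $G/\z(G) \simeq \PSL_2(7)$. The task is then to exhibit a subgroup $\bar H \subset \SL_2(7)$ satisfying the two hypotheses of the lemma. First I would look for a non-abelian subgroup $\bar H$ whose image $H$ in $\PSL_2(7)$ is abelian; the natural candidate is the preimage of an elementary abelian subgroup of $\PSL_2(7)$. Concretely, $\PSL_2(7)$ contains a subgroup isomorphic to $\mumu_2 \times \mumu_2$ (a Klein four-group, which is abelian), and its preimage $\bar H$ in $\SL_2(7)$ is a group of order $8$ containing the central $\mumu_2$. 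Since the extension is non-split and the commutator of two lifts of distinct involutions lands in the center, $\bar H$ is the quaternion group $\Q_8$, which is non-abelian with $[\bar H,\bar H] = \z(G) \simeq \mumu_2$. This verifies hypothesis \eqref{lemma-G-1}.

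The substantive step is hypothesis \eqref{lemma-G-2}: I must show that for every action of $\PSL_2(7)$ on a rational projective surface $S$, the Klein four-subgroup $H \simeq \mumu_2 \times \mumu_2$ has a fixed point on $S$. Here I would appeal to the classification of finite subgroups of the plane Cremona group and their fixed loci, following the approach of Dolgachev--Iskovskikh. After running an equivariant minimal model program, the $\PSL_2(7)$-surface $S$ is either a del Pezzo surface or a conic bundle; in either case one analyzes the action of the abelian subgroup $H$ and computes a fixed point using a topological (Lefschetz-type) or cohomological argument — for an abelian group acting on a rational surface, the fixed-point count is governed by the trace on cohomology, and for the Klein four-group acting on a surface with the relevant Euler characteristic one forces $S^H \neq \emptyset$. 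This is exactly the kind of fixed-point statement that the cited papers \cite{Prokhorov2009e} and \cite{Prokhorov-Shramov-J-const} are designed to supply.

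The main obstacle I anticipate is precisely this fixed-point verification in \eqref{lemma-G-2}: one cannot simply assert that an abelian group acting on a rational surface has a fixed point (it is false in general — an abelian group can act freely in part, and conic bundle structures can obstruct fixed points for certain subgroups). What must be used is the specific arithmetic of $\PSL_2(7)$ together with the particular Klein four-subgroup $H$, checking that in each case of the $\PSL_2(7)$-equivariant classification of rational surfaces the induced $H$-action fixes a point. Once \eqref{lemma-G-1} and \eqref{lemma-G-2} are established, Lemma \ref{lemma-G} yields $\ed(\SL_2(7)) \ge 4$, and combined with the upper bound this gives $\ed(\SL_2(7)) = 4$.
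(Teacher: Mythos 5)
Your overall strategy coincides with the paper's: the upper bound comes from the four\-/dimensional faithful representation, and the lower bound from Lemma~\ref{lemma-G} applied to a quaternion subgroup $\bar H\simeq \Q_8\subset \SL_2(7)$ covering a Klein four-group $H\subset \PSL_2(7)$ (the paper exhibits $\bar H$ by explicit matrices, but your abstract identification of the preimage as $\Q_8$ is sound, since $-I$ is the unique involution of $\SL_2(7)$, so hypothesis~\ref{lemma-G-1} holds). The genuine gap is hypothesis~\ref{lemma-G-2}: you correctly single it out as the crux and then leave it unproven. The paper devotes a separate statement to it (Lemma~\ref{lemma-SL27-fixed point}), and none of the three ingredients of its proof appears in your proposal. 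First, one needs the Koll\'ar--Szab\'o theorem \cite{Kollar-Szabo-2000} that for an \emph{abelian} group the existence of a fixed point is a birational invariant of smooth projective models; without it, passing to an equivariant minimal model, as you propose, says nothing about the original surface $B\subset X$, which need not be minimal, and fixed points do not automatically lift through equivariant blowups (a Klein four-group can act on an exceptional $\PP^1$ without fixed points). Second, one needs the classification of Theorem~\ref{theorem-2-dimensional-simple} (proved in the paper's appendix following Dolgachev--Iskovskikh), which reduces the problem to $\PP^2=\PP(W)$ --- where $H$ fixes a point because it acts linearly on the three-dimensional representation $W$ and abelian groups have common eigenvectors --- and to the special del Pezzo surface of degree $2$ double-covering $\PP^2$ along the Klein quartic.

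Third, and this is where your sketch is not merely incomplete but would fail, on the degree-$2$ surface no trace computation ``forces $S^H\neq\emptyset$''. Lefschetz-type formulas control fixed points of \emph{individual} elements, never common fixed points of a group: $\mumu_2\times\mumu_2\subset\PGL_2(\CC)$ acting on $\PP^1$ has every element fixing two points yet no common fixed point --- a failure mode you yourself acknowledge but do not resolve. The paper's actual argument is structural: if an involution $\upalpha\in H$ had only isolated fixed points, the holomorphic Lefschetz formula would give exactly four of them, the topological formula would then give $\Tr_{H^2(S,\CC)}\upalpha^*=2$, hence $\det\upalpha^*|_{H^2(S,\CC)}=-1$, producing a nontrivial character of the simple group $\PSL_2(7)$, a contradiction; therefore every involution fixes a curve, that curve is shown to be anticanonical (hence ample), and the fixed curves of two distinct involutions of $H$ must meet, their intersection points being fixed by all of $H$. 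Finally, your appeal to \cite{Prokhorov2009e} and \cite{Prokhorov-Shramov-J-const} for this fixed-point statement is misplaced: those papers concern threefolds and do not contain the needed surface lemma, which is exactly why the paper proves it itself.
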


\begin{slemma}\label{lemma-SL27-fixed point}
Let $S$ be a smooth projective rational surface admitting the action 
of $\PSL_2(7)$. Let $H\subset \PSL_2(7)$ be a subgroup isomorphic to
$\mumu_2\times \mumu_2$. Then $H$ has a fixed point on $S$.
\end{slemma}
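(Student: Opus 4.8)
The plan is to combine the $G$-equivariant minimal model program for the whole group $G=\PSL_2(7)$ with a direct analysis of fixed points on the resulting minimal model, and then to transport those fixed points back up to $S$. First I would run the $G$-equivariant MMP on the smooth projective rational surface $S$, obtaining a $G$-equivariant birational morphism $\pi\colon S\to S_{\min}$ onto a $G$-minimal rational surface. A $G$-minimal model is either a del Pezzo surface with $\rk\Pic(S_{\min})^{G}=1$ or a $G$-conic bundle. The conic bundle case is ruled out immediately: the action on the base would give a homomorphism $G\to\PGL_2(\CC)$, and since $G=\PSL_2(7)$ is simple and is not one of the finite subgroups of $\PGL_2(\CC)$ (cyclic, dihedral, $\Alt_4$, $\mathfrak S_4$, $\Alt_5$), this homomorphism is trivial; then $G$ would act faithfully on a general fibre $\PP^1$, which is again impossible. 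Hence $S_{\min}$ is a del Pezzo surface with $\rk\Pic(S_{\min})^{G}=1$, and by the classification of del Pezzo surfaces carrying an action of a group of order $168$ (Dolgachev--Iskovskikh) the only possibilities are $S_{\min}\cong\PP^2$ and the degree-two del Pezzo surface realized as the double cover of $\PP^2$ branched over the Klein quartic.

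The second step is a lifting principle that reduces everything to $S_{\min}$: it suffices to produce an $H$-fixed point on $S_{\min}$. Indeed, $\pi$ is a composition of contractions of $G$-orbits of disjoint $(-1)$-curves, so $S$ is obtained from $S_{\min}$ by successively blowing up $G$-orbits of (possibly infinitely near) points. I would argue by induction on the number of blow-ups, maintaining an $H$-fixed point at each intermediate surface. At a given stage, if the chosen $H$-fixed point $p$ does not lie in the blown-up $G$-orbit, it lifts directly. If it does lie in the orbit, then $H\subseteq\operatorname{Stab}_G(p)$, and the exceptional curve over $p$ is $\PP(T_{p})\cong\PP^1$ with $H$ acting through its linear action on the tangent plane $T_{p}$; since $H\cong\mumu_2\times\mumu_2$ is abelian this two-dimensional representation splits into two eigenlines, so $\PP(T_{p})$ carries (at least) two $H$-fixed points, one of which I carry to the next stage. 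Thus an $H$-fixed point on $S_{\min}$ always propagates to an $H$-fixed point on $S$.

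The third step is the verification on the two minimal models. For $S_{\min}=\PP^2$ this is immediate: $H$ acts through a linear representation on $\CC^3$, which decomposes into one-dimensional characters, and any eigenvector yields an $H$-fixed point. For the degree-two del Pezzo surface $Y\xrightarrow{2:1}\PP^2$ one works over an $H$-fixed point $\bar p\in\PP^2$: the fibre of $Y$ over $\bar p$ is an orbit of the central Geiser involution $\gamma$, and since $H\subset\PSL_2(7)$ commutes with $\gamma$, the $H$-action on this fibre is encoded by a homomorphism $H\to\ZZ/2$ recording whether the two sheets are interchanged. Whenever this homomorphism is trivial the fibre consists of $H$-fixed points; the remaining task is to secure such a point, either by choosing $\bar p$ so that the sheets are not swapped or by using an $H$-fixed point lying over the branch quartic, where the fibre is a single ramification point and is therefore automatically $H$-fixed.

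I expect the main obstacle to be precisely this last point: the linear fixed points on $\PP^2$ need not automatically lift to $H$-fixed points of the double cover, and controlling the sheet-swapping homomorphism $H\to\ZZ/2$ at each of the eigen-points requires the explicit geometry of the $\PSL_2(7)$-action on the Klein quartic and its ramification curve. The MMP reduction and the eigenline lifting principle are clean and general; the genuine work is the case analysis on the degree-two del Pezzo surface, which I would expect to be routine but is the crux of the argument.
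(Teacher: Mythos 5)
Your reduction to the minimal models is sound, and it parallels the paper's: you run the $\PSL_2(7)$-equivariant MMP, rule out conic bundles, and invoke the Dolgachev--Iskovskikh classification to land on exactly the two surfaces the paper works with ($\PP^2$ and the degree-two del Pezzo surface double-covering $\PP^2$ along the Klein quartic). Your lifting principle is also correct, and it is a nice elementary substitute for what the paper does at this step: the paper simply quotes the Koll\'ar--Szab\'o theorem (existence of fixed points of a finite \emph{abelian} group on smooth proper varieties is a birational invariant), which is stronger than what you prove but is needed only in the direction you establish by hand, namely propagating an $H$-fixed point from $S_{\min}$ up through the $G$-equivariant blow-ups to $S$. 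The $\PP^2$ case is likewise fine.

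The genuine gap is the degree-two del Pezzo case, which you explicitly leave open (``the remaining task is to secure such a point''). This is not a routine verification; it is where essentially all of the work in the paper's proof lies. Concretely: $H$ has exactly three fixed points on $\PP^2$ (the restriction to $H$ of the three-dimensional representation splits into three distinct characters), and you must show that at least one of them either lies on the branch quartic $B$ or has its two preimages in $S$ not interchanged by any element of $H$. Your sheet-swapping homomorphisms $\phi_i\colon H\to \ZZ/2$ give a language for this but no proof: nothing you have written excludes the possibility that all three points lie off $B$ and all three $\phi_i$ are nontrivial. The paper resolves this without ever touching the explicit geometry of the Klein quartic, by a dichotomy on an involution $\upalpha\in H$. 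If $\upalpha$ fixes a curve $C$, then its image under the double cover $\pi\colon S\to\PP^2$ is a line; using that a second involution $\upalpha'\in H$ carries $C$ to another $\upalpha$-fixed curve over the same line, and that fixed loci are smooth, one gets $C=\pi^{-1}(\pi(C))\sim -K_S$; since all involutions of $\PSL_2(7)$ are conjugate, $\upalpha'$ also fixes a curve $C'\sim -K_S$, and ampleness forces $C\cap C'\neq\emptyset$, which consists of $H$-fixed points. If instead every involution of $H$ has only isolated fixed points, the holomorphic Lefschetz formula gives exactly $4\chi(\OOO_S)=4$ of them, the topological Lefschetz formula then gives $\Tr_{H^2(S,\CC)}\upalpha^*=2$, and since the eigenvalues of $\upalpha^*$ on the $8$-dimensional space $H^2(S,\CC)$ are $\pm1$, this forces $\det\upalpha^*|_{H^2}=-1$, i.e.\ a nontrivial character of the simple group $\PSL_2(7)$ --- a contradiction. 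Some argument of this type (or a genuine explicit computation with the Klein quartic) is indispensable; as it stands, your proposal does not prove the lemma.
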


\begin{proof}
Since $H$ is abelian, according to \cite{Kollar-Szabo-2000} it is sufficient to show 
the existence of a fixed point on some birational model of $S$.
By Theorem \ref{theorem-2-dimensional-simple} we may assume that 
$S$ is either $\PP^2$ or some special del Pezzo surface of degree $2$
(see \ref{theorem-2-dimensional-simple}\ref{theorem-2-dimensional-simple-2}).
In the former case, $\PP^2=\PP(W)$, where $W$ is a three-dimensional irreducible 
representation of $\PSL_2(7)$. Then  the abelian group $H\simeq \mumu_2\times \mumu_2$ has 
a fixed point on $\PP^2=\PP(W)$. Thus we assume that $S$ is a del Pezzo surface of degree $2$.

Let $\upalpha\in H$ be an  element of order $2$.
First assume that 
$\upalpha$ has a curve $C$ of fixed points. 
The image $\pi(C)$ under the anticanonical double cover
$\pi: S\to \PP^2$ must be a line (because the action on $\PP^2$ is linear). 
Let $\upalpha'\in H$, $\upalpha'\neq \upalpha$ be another element  of order $2$.
Then $\upalpha'(C)$ is also a curve $C$ of $\upalpha$-fixed points
and $\pi(\upalpha'(C))$ is also a line. 
Hence, $\pi(\upalpha'(C))=\pi(C)$ and  $\pi^{-1}(\pi(C))$
contains $\upalpha'(C)$ and $C$. Since $\pi^{-1}(\pi(C))\sim -K_S$  and 
the fixed point locus of $\upalpha$ is smooth, we have $\upalpha'(C)=C=\pi^{-1}(\pi(C))\sim -K_S$
and it is an ample divisor. Note that all the elements of order $2$ are conjugate in $\PSL_2(7)$.
Hence $\upalpha'$ also has a curve  of fixed points, say $C'$, and $C'\sim -K_S$.
Then the intersection points $C\cap C'$ are fixed by $H=\langle\upalpha,\, \upalpha'\rangle$.

Thus we  may assume that  any element $\upalpha\in H$ of order $2$
has only isolated fixed points.
The holomorphic Lefschetz 
fixed point formula shows that the number of these fixed points equals $4\chi(\OOO_S)=4$.
Then by the topological Lefschetz 
fixed point formula 
\[
\Tr_{H^2(S,\CC)} \upalpha^*=2.
\]
Since $\dim H^2(S,\CC)=8$ and all the eigenvalues of $\upalpha^*$ equal $\pm 1$, 
its determinant  must be equal to $-1$ and so we have a non-trivial character 
of the group $\PSL_2(7)$.  
This contradicts the the fact that $\PSL_2(7)$ is simple.
\end{proof}

\begin{proof}[Proof of Proposition \xref{proposition-SL27}]
Consider the subgroup $\bar H\subset \SL_2(7)$ generated by the matrices 
\[
A=
\begin{pmatrix}
1 &1 \cr
5 &-1 \cr
\end{pmatrix}
\qquad
B=\begin{pmatrix}
1 &5 \cr
1 &-1 \cr
\end{pmatrix}
\]
Let $H$ be its image in $\PSL_2(7)$.
It is easy to check that $A^2=B^2=-I$ and $[A,B]=-I$. Hence $\bar H$ is isomorphic to the quaternion group $\Q_8$
and $H\simeq \Q_8/\z(\Q_8)\simeq \mumu_2\times \mumu_2$. By Lemma \ref{lemma-SL27-fixed point}
the group $\bar H$ has a fixed point on $B\subset X$. 
Hence we can apply Lemma \ref{lemma-G}.
\end{proof}

\begin{proposition}\label{proposition-A6}
$\ed(2.\Alt_6)= 4$, $\ed(6.\Alt_6)\ge 4$.
\end{proposition}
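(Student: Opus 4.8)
The plan is to treat both groups by the same mechanism as Proposition \ref{proposition-SL27}: take for $\bar H$ the preimage in $G$ of a Klein four-subgroup $H\simeq\mumu_2\times\mumu_2$ of $\Alt_6=G/\z(G)$, check the two hypotheses of Lemma \ref{lemma-G}, and conclude $\ed(G)\ge 4$. For the upper bound I would first observe that by Corollary \ref{corollary-representations} the group $2.\Alt_6$ has a faithful irreducible four-dimensional representation $V$; since the identity $V\dashrightarrow V$ is a compression, $\ed(2.\Alt_6)\le 4$. No upper bound is needed for $6.\Alt_6$. In both cases $G\not\simeq 2.\Alt_5,\,3.\Alt_6$ because $\z(G)$ equals $\mumu_2$, resp.\ $\mumu_6$, while $G/\z(G)=\Alt_6$, so Lemma \ref{lemma-G} is applicable.

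Next I would verify hypothesis \ref{lemma-G-1}. Choose $H=\langle (12)(34),\,(13)(24)\rangle\subset\Alt_6$, which lies in the copy of $\Alt_4$ on $\{1,2,3,4\}$. In $2.\Alt_6$ the preimage of this $\Alt_4$ is $2.\Alt_4\simeq\SL_2(3)$, whose Sylow $2$-subgroup is the quaternion group $\Q_8$; hence $\bar H\simeq\Q_8$ is non-abelian while its image $H$ is abelian. For $6.\Alt_6$ the preimage $\bar H$ maps onto this $\Q_8$ under $6.\Alt_6\to 2.\Alt_6$ with central kernel $\mumu_3$, so $\bar H$ is a central extension of $\Q_8$ by $\mumu_3$; as $\gcd(3,8)=1$ it splits as $\mumu_3\times\Q_8$, again non-abelian with abelian image $H$.

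The substantial step is hypothesis \ref{lemma-G-2}: for every action of $\Alt_6$ on a rational projective surface $S$ the subgroup $H$ must have a fixed point, i.e.\ I need the exact analogue of Lemma \ref{lemma-SL27-fixed point} for $\Alt_6$. Since $H$ is abelian, by \cite{Kollar-Szabo-2000} it suffices to produce a fixed point on some birational model; running the $\Alt_6$-equivariant minimal model program and invoking Theorem \ref{theorem-2-dimensional-simple}, I would reduce $S$ to an $\Alt_6$-minimal rational surface. The decisive point, in contrast to the $\PSL_2(7)$ case, is that for $\Alt_6$ this minimal model can only be $\PP^2$: because $\Alt_6$ has no faithful action on a rational curve it occurs neither on a quadric nor on a conic bundle (each would force $\Alt_6\hookrightarrow\PGL_2$), and an order/Weyl-group count excludes every del Pezzo surface of degree $1\le d\le 6$, none of whose automorphism groups contains a simple group of order $360=2^3\cdot 3^2\cdot 5$ (already in degree $5$ one only gets $\Sym_5$). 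Thus $S$ is $\Alt_6$-birational to $\PP^2=\PP(W)$, where $W$ is the three-dimensional representation of the Valentiner group $3.\Alt_6$.

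On $\PP(W)$ the fixed-point statement becomes representation-theoretic and clean: the preimage of $H$ in $3.\Alt_6$ is a central extension of $H\simeq\mumu_2\times\mumu_2$ by $\mumu_3$, and since $H^2(\mumu_2\times\mumu_2,\mumu_3)=0$ it splits as $\mumu_3\times H$, hence acts diagonalizably on $W$; therefore $H$ has a common eigenvector and a fixed point on $\PP^2$, which \cite{Kollar-Szabo-2000} transports back to $S$. With \ref{lemma-G-1} and \ref{lemma-G-2} established, Lemma \ref{lemma-G} gives $\ed(G)\ge 4$ for both $G=2.\Alt_6$ and $G=6.\Alt_6$; combining with Proposition \ref{proposition-center} and $\Alt_6\not\hookrightarrow\PGL_2$ (which excludes $\ed=2$) and with the bound $\ed(2.\Alt_6)\le 4$ yields $\ed(2.\Alt_6)=4$. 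The one genuine obstacle I anticipate is the surface-classification step: making precise, via Theorem \ref{theorem-2-dimensional-simple}, that $\PP^2$ is the only $\Alt_6$-minimal rational surface, so that the eigenvector argument applies directly and no Lefschetz-type computation as in Lemma \ref{lemma-SL27-fixed point} is required.
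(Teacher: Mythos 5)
Your proposal is correct and takes essentially the same approach as the paper: both prove the lower bound by feeding a quaternion group lying over a Klein four-subgroup of $\Alt_6$ into Lemma \ref{lemma-G} (the paper exhibits $\bar H\simeq\Q_8$ by explicit matrices in $2.\Alt_6\simeq\SL_2(9)$, and for $6.\Alt_6$ takes the Sylow $2$-subgroup of the preimage where you take the full preimage $\mumu_3\times\Q_8$ --- both work), and both verify hypothesis \ref{lemma-G-2} by reducing, via Theorem \ref{theorem-2-dimensional-simple} and \cite{Kollar-Szabo-2000}, to the $\Alt_6$-action on $\PP^2$, where your coprimality/splitting argument supplies the fixed point that the paper leaves implicit, just as your remark on the four-dimensional representation supplies the implicit upper bound $\ed(2.\Alt_6)\le 4$. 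The only blemish is your redundant aside that a ``Weyl-group order count'' excludes the del Pezzo surfaces of degree $3$ --- in fact $\Alt_6$ does embed in the Weyl group of $E_6$ through $\PSp_4(3)$, and the real exclusion uses the faithful linear action on $H^0(X,-K_X)\simeq\CC^4$ --- but this costs nothing, since Theorem \ref{theorem-2-dimensional-simple}, which you invoke anyway, already lists $\PP^2$ as the only surface carrying an $\Alt_6$-action.
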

\begin{proof}
We are going to apply Lemma \ref{lemma-G}.
Let $S$ be a projective rational surface acted by $G/\z(G)=\Alt_6$.
By Theorem \ref{theorem-2-dimensional-simple} we may assume that $S\simeq \PP^2$.
The group $2.\Alt_6$ is isomorphic to $\SL_2(9)$ (see \cite{atlas}).
As in the proof of Proposition \xref{proposition-SL27} take the subgroup
$\bar H\subset \SL_2(9)$ generated by the matrices 
\begin{equation}
\label{equation-AB}
A=\begin{pmatrix}1&2\\ -1&-1\end{pmatrix}
\qquad
B=\begin{pmatrix}4&1\\ 1&-4\end{pmatrix}
\end{equation}
and  apply Lemma \ref{lemma-G}. 

In the case $G=6.\Alt_6$, let $Z\subset \z(G)$ be the subgroup of order $3$.
Then $6.\Alt_6/Z\simeq 2.\Alt_6\simeq \SL_2(9)$.
Let $\hat H$ be the inverse image of the subgroup of
$\SL_2(9)$ generated by $A$ and $B$ from \eqref{equation-AB} and let $\bar H\subset \hat H$ be the Sylow $2$-subgroup.
Then, as above, $\bar H\simeq \Q_8$ and we can apply Lemma \ref{lemma-G}.
\end{proof}

\begin{sremark}
Since $6.\Alt_6$ has a six-dimensional faithful representation, one has 
$\ed(6.\Alt_6)\le 6$. However, we are not able to compute the precise value.
\end{sremark}

\section{Appendix: simple subgroups in the plane Cremona group}

The following theorem can be easily extracted from the classification \cite{Dolgachev-Iskovskikh}.
For convenience of the reader we provide a relatively short and
self-contained proof. 

\begin{theorem}[{\cite{Dolgachev-Iskovskikh}}]
\label{theorem-2-dimensional-simple}
Let $G\subset \Cr_2(\CC)$ be a finite simple subgroup.
Then the embedding $G\subset \Cr_2(\CC)$ is induced by 
one of the following actions:
\begin{enumerate}
\item \label{theorem-2-dimensional-simple-P2}
$\Alt_5$, $\PSL_2(7)$, or $\Alt_6$ acting on $\PP^2$,
\item \label{theorem-2-dimensional-simple-5}
$\Alt_5$ acting on the del Pezzo surface of degree $5$;
\item \label{theorem-2-dimensional-simple-2}
$\PSL_2(7)$ acting on some special del Pezzo surface of degree $2$
which can be realized as a double cover of $\PP^2$ branched in the 
Klein quartic curve;

\item \label{theorem-2-dimensional-simple-P1P1}
$\Alt_5$ acting on $\PP^1\times \PP^1$ through the first factor.
\end{enumerate}
\end{theorem}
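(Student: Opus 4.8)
The plan is to pass to a biregular model and run the equivariant minimal model program. By the regularization of birational actions, the embedding $G\subset\Cr_2(\CC)$ is induced by a faithful biregular action of $G$ on some smooth projective rational surface $S$. Running the $G$-equivariant minimal model program, I may assume that $S$ is $G$-minimal; by the structure theory of $G$-minimal rational surfaces (Manin--Iskovskikh), $S$ is then either a del Pezzo surface with $\rk\Pic(S)^G=1$, or a conic bundle $\varphi\colon S\to\PP^1$ with $\rk\Pic(S)^G=2$. I treat these two cases separately.

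In the conic bundle case the action descends to a homomorphism $G\to\Aut(\PP^1)=\PGL_2(\CC)$ on the base. Since the only finite non-abelian simple subgroup of $\PGL_2(\CC)$, and more generally of $\PGL_2$ over any field of characteristic zero, is $\Alt_5$, examining both the base action and the action on the generic fibre (over $\CC(t)$) forces $G\simeq\Alt_5$. Analysing the degenerate fibres—using that $\Alt_5$ has no subgroup of index $2$ and no faithful two-dimensional representation—one rules them out or contracts a $G$-invariant set of their components, and concludes that $S\cong\PP^1\times\PP^1$ with $G$ acting through one factor, which is case \ref{theorem-2-dimensional-simple-P1P1}.

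For a del Pezzo surface I argue by the degree $d=K_S^2$. The $G$-action on $\Pic(S)$ preserves $K_S$ and the intersection form, hence factors through the Weyl group of the root lattice $K_S^{\perp}$ (of type $\mathrm A_2\times\mathrm A_1,\ \mathrm A_4,\ \mathrm D_5,\ \mathrm E_6,\ \mathrm E_7,\ \mathrm E_8$ for $d=6,5,4,3,2,1$), while $G$-minimality forces $\Pic(S)^G\otimes\QQ=\QQ K_S$. For $d=9$ one has $S=\PP^2$ and $G$ is a finite non-abelian simple subgroup of $\PGL_3(\CC)$; by the classical classification these are exactly $\Alt_5$, $\Alt_6$, and $\PSL_2(7)$, giving case \ref{theorem-2-dimensional-simple-P2}. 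For $d=8$ the only $G$-minimal surface is $\PP^1\times\PP^1$, and rank-one invariant Picard would force $G$ to interchange its two rulings, impossible for a perfect group, so this degree contributes nothing new. For $d=6,7$ the relevant Weyl group is solvable, respectively the $(-1)$-curves carry a $G$-invariant contraction, so no non-abelian simple group acts $G$-minimally. The del Pezzo surface of degree $5$ is unique, with $\Aut(S)=\Sym_5$, and its subgroup $\Alt_5$ acts with $\rk\Pic(S)^{\Alt_5}=1$, giving case \ref{theorem-2-dimensional-simple-5}; the degree $2$ surface that is the double cover of $\PP^2$ branched in the Klein quartic carries the $G$-minimal $\PSL_2(7)$-action of case \ref{theorem-2-dimensional-simple-2}. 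It remains to eliminate the degrees $d=1,3,4$.

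The main obstacle is exactly this elimination, together with the minimality checks in degrees $5$ and $2$. For each of $d=1,3,4$ one has the faithful embedding $\Aut(S)\hookrightarrow W$ into the corresponding Weyl group, and a non-abelian simple subgroup must be $\Alt_5$ (for $d=3,4$) or lie on a short list (for $d=1$); the task is to decide which of these embeddings are realized by an actual surface and, for the realized ones, to produce a $G$-invariant set of pairwise disjoint $(-1)$-curves whose equivariant contraction lowers the Picard number—reaching $\PP^2$ (for the Clebsch cubic, $d=3$) or otherwise a surface already on the list. This demands careful bookkeeping of the $G$-orbits among the $27$, $16$, and $240$ lines and of the possible embeddings into $W(\mathrm E_6)$, $W(\mathrm D_5)$, and $W(\mathrm E_8)$, and is where essentially all the work lies; the central Bertini involution in the $d=1$ case, and the twisted embeddings $\Alt_5\hookrightarrow W(\mathrm D_5)$ (which a priori could kill the invariant vector) in the $d=4$ case, are the trickiest points. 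By contrast, the two positive cases are easy: the minimality of the degree $5$ and degree $2$ actions reduces to checking that $K_S^{\perp}\otimes\QQ$ has no non-zero $G$-invariant vector, i.e.\ that the standard $4$-dimensional representation of $\Alt_5$ and the $7$-dimensional Steinberg representation of $\PSL_2(7)$ contain no trivial summand.
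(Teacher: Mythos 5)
Your reduction (regularization, $G$-equivariant MMP, the dichotomy del Pezzo with $\rk\Pic(S)^G=1$ versus conic bundle with $\rk\Pic(S)^G=2$, then a case analysis on $d=K_S^2$) is exactly the paper's skeleton, and your treatment of the conic bundle case and of the degrees $d=5,6,7,8,9$ is essentially sound. But the proposal is not a proof: for the degrees $d=1,3,4$ you explicitly defer the elimination (``this demands careful bookkeeping \dots\ and is where essentially all the work lies'') without carrying it out, and for $d=2$ you only exhibit the known $\PSL_2(7)$-action on the Klein double cover, whereas what must be proved is the converse --- that \emph{no other} simple group acts minimally on \emph{any} degree-$2$ del Pezzo surface, and that the branch quartic is forced to be the Klein curve. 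So the cases that constitute the actual content of the theorem are left open, plus one exclusion ($d=2$) is not even identified as needing an argument.

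It is also worth noting that the Weyl-group bookkeeping you anticipate (orbits among the $27$, $16$, $240$ lines, embeddings into $W(\mathrm E_6)$, $W(\mathrm D_5)$, $W(\mathrm E_8)$) is far heavier than what is needed; the paper disposes of these degrees by short geometric arguments. For $d=1$: the base point $P$ of $|-K_X|$ is fixed, so $G\hookrightarrow \GL(T_{P,X})=\GL_2(\CC)$, which contains no finite simple non-abelian subgroup --- no mention of $W(\mathrm E_8)$ or the Bertini involution at all. For $d=4$: $G$ acts on the pencil of quadrics through $X$ fixing its five singular members, hence acts trivially on the pencil, hence fixes the five cone vertices, which span $\PP^4$, forcing $G$ abelian. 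For $d=2$: $G$ embeds into $\Aut$ of the branch quartic $B$, and the Hurwitz bound $|\Aut(B)|\le 168$ together with the absence of order-$5$ elements leaves only $\PSL_2(7)$ and the Klein quartic. For $d=3$: the lattice argument is used, but only to the extent of $G\hookrightarrow [\Ort_6(\mathbf F_2)^-,\Ort_6(\mathbf F_2)^-]\simeq\PSp_4(3)$ via the Arf-invariant-$1$ quadratic form on $\Lambda/2\Lambda$; combined with the faithful $4$-dimensional action on $H^0(X,-K_X)$ this forces $G\simeq\Alt_5$ on the Clebsch cubic, which is then ruled out by a topological Lefschetz computation showing $\rk\Pic(X)^G=2$. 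If you intend to complete your proposal, you should either import these arguments or genuinely execute the lattice-theoretic program you describe --- the latter is a substantially longer route.
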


\begin{remark}
It is known (see \cite[\S 8]{Dolgachev-Iskovskikh} and \cite[\S B]{Cheltsov2014b}) that
the above listed actions are not conjugate in $\Cr_2(\CC)$.
\end{remark}

\begin{proof}
Applying the standard arguments (see e.g. \cite[\S 3]{Dolgachev-Iskovskikh}) we may assume that $G$ 
faithfully acts 
on a smooth projective rational surface $X$ which is either
a del Pezzo surface or an equivariant conic bundle.
Moreover, one has $\rk \Pic(X)^G=1$ (resp. $\rk \Pic(X)^G=1$) in the del Pezzo (resp. conic bundle)
case.

First, consider the case where 
$X$ has an equivariant conic bundle structure $\pi: X\to B\simeq \PP^1$.
Since the group $G$ is simple, it acts faithfully either on the base $B$ or on the
general fiber. Hence $G$ is embeddable to $\PGL_2(\CC)$.
This is possible only if $G\simeq \Alt_5$.
We claim that $\pi$ is a $\PP^1$-bundle. 
Assume that $\pi$ has a degenerate fiber $F$. Then its components 
$F', F''\subset F$ must be switched by an element $\upalpha\in G$ of order $2$.
The intersection point $P=F'\cap F''$ is fixed by $\upalpha$ and 
the action of $\upalpha$ on the tangent space $T_{P,X}$ is diagonalizable. 
Since the tangent directions to $F'$ and $F''$ are interchanged, 
the action of $\upalpha$ on $T_{P,X}$ has the form $\diag (1,-1)$.
This means that $\upalpha$ has a curve $C$ of fixed points passing through $P$.
Clearly, $C$ dominates $B$ and so the action of $\upalpha$ on $B$ is trivial.
All elements of order $2$ in $G\simeq \Alt_5$ are conjugate and generate $G$.
Hence $G$ acts on $B$ trivially. But then the point $P$ must be fixed by $G$ and 
$G$ has a faithful two-dimensional representation on $T_{P,X}$, a contradiction.

Thus, $\pi$ is a $\PP^1$-bundle and $X$ is a Hirzebruch surface $\FF_n$.
Let $F_1$ be a fiber and let $F_1, \dots, F_r$ be its orbit.
By making elementary transformations in $F_1, \dots, F_r$
one obtains an equivariant birational maps $\FF_n\dashrightarrow \FF_{n+r}$
and (only if $n\ge r$) $\FF_n\dashrightarrow \FF_{n-r}$.
There are such orbits for $r=12$, $20$, $30$, and $60$.
Using this trick we may replace $\FF_n$ with $\FF_{n'}$, where $n'=0$ or $1$.
If $n'=1$, then contracting the negative section we obtain
the action on $\PP^2$ with a fixed point. This is impossible for
$G\simeq \Alt_5$. Hence, we may assume that $X\simeq \PP^1\times \PP^1$.
Additional elementary transformations allow to trivialize the action on 
the second factor. This is the case \ref{theorem-2-dimensional-simple-P1P1}.
See \cite[\S B]{Cheltsov2014b} for details.

From now on we assume that $X$ is a del Pezzo surface\footnote{More generally, actions of simple groups on del Pezzo surfaces with log terminal singularities were 
studied in  \cite{Belousov2015}.} with 
$\rk\Pic(X)^G=1$. We consider the possibilities 
according to the degree $d =K_X^2$. 

\begin{case*}{\bf Case $d=1$.}
This case cannot occur, as $|-K_X|$ has one base point $P$, and 
$G$ has to act on $T_{P,X}$
effectively. Hence $G\subset \GL(T_{P,X})$.
However, there are no 
simple finite subgroups in $\GL_2(\CC)$, a contradiction. 
\end{case*}

\begin{case*}{\bf Case $d=2$.}
Then the anticanonical map 
$X\to \PP^2$ is a double cover whose 
branch divisor $B\subset \PP^2$ is a smooth quartic.
The action of $G$ in $X$ descends to $\PP^2$ so that 
$B$ is $G$-stable. Therefore, $G\subset \Aut(B)$.
According to the Hurwitz bound $| G| \le 168$.
Moreover, $\Aut(B)$ contains no elements of order $5$.
Then the only possibility is
$G\simeq \PSL_2( 7)$ and $B=\{x_1^3x_2+x_2^3x_3+x_3^3x_1=0\}$. 
We get the case \ref{theorem-2-dimensional-simple-2}.
\end{case*}

\begin{case*}{\bf Case $d=3$.}
Then $X$ is a cubic surface in $\PP^3$.
The action of $G$ on the lattice $\Lambda:=K_X^\perp\subset \Pic(X)$
is faithful. Hence our group $G$ has a representation on the vector space 
$\Lambda/2\Lambda=(\mathbf F_2)^6$ over the field $\mathbf F_2$. 
The intersection form induces an even quadratic form on $\Lambda$ 
and, therefore, it induces a quadratic form
\[
q(x):= \frac 12 (x,x) \mod 2.
\]
Take a standard basis $\mathbf h, \mathbf e_1,\dots, \mathbf e_6$ in $\Pic(X)$
with $(\mathbf h,\mathbf h)=1$, $(\mathbf h,\mathbf e_i)=0$, $(\mathbf e_i,\mathbf e_j)=-\delta_j^i$. 
Then using the basis 
$\mathbf e_1-\mathbf e_2$,
$\mathbf e_2-\mathbf e_3$,
$\mathbf e_4-\mathbf e_5$,
$\mathbf e_5-\mathbf e_6$,
$\mathbf h-\mathbf e_1-\mathbf e_2-\mathbf e_3$,
$\mathbf h-\mathbf e_4-\mathbf e_5-\mathbf e_6$ of $\Lambda$
we can write $q(x)$ in the following form:
\[
q(x)= x_1^2+ x_2^2+x_1x_2+ x_3^2+ x_4^2+x_3x_4+ x_5^2+ x_6^2+x_5x_6. 
\]
Then it is easy to see that the Arf invariant of $q(x)$ equals $1$.
The group preserves the intersection form and the quadratic form $q(x)$.
Therefore, there is a natural embedding $G \hookrightarrow \Ort_6(\mathbf F_2)^-$.
Since $G$ is simple, $G\subset [\Ort_6(\mathbf F_2)^-, \Ort_6(\mathbf F_2)^-]$.
It is known that $[\Ort_6(\mathbf F_2)^-, \Ort_6(\mathbf F_2)^-]\simeq \PSp_4(3)$
(see e.g.  \cite{atlas}). Moreover, 
$G$ is isomorphic to one of the following groups: $\PSp_4(3)$, $\Alt_6$ or $\Alt_5$.
On the other hand, $G$ faithfully acts on 
$H^0 (X, -K_X)\simeq \CC^4$. 
Then the only possibility is $G\simeq \Alt_5$. The defining equation $\psi(z)=0$ of $X\subset \PP^3$ 
is a cubic invariant on 
$H^0 (X, -K_X)$. Hence the representation on $H^0 (X, -K_X)$ is the standard irreducible
representation of $\Alt_5$. Then in a suitable basis we have $\psi=z_1^3+\cdots+z_4^3-(z_1+\dots+z_4)^3$.
The fixed point locus of an element $\upalpha\in G$ of order two is a union of a line and three 
isolated points. By the topological Lefschetz fixed point formula the action of $\upalpha$ on $\Lambda$ 
is diagonalizable as follows: $\upalpha=\diag(1,1,1,1,-1,-1)$. This implies that
the representation of $G$ on $\Lambda$ is the sum of the irreducible four-dimensional representation and 
the trivial one.
This contradicts the minimality assumption $\rk\Pic(X)^G=1$. 
\end{case*}

\begin{case*}{\bf Case $d=4$.}
Then $X = X_{2\cdot 2} = Q' \cap Q'' \subset \PP^4$ is an intersection of 
two quadrics. The group
$G$ acts on the pencil of quadrics $\langle Q', Q''\rangle \simeq \PP^1$
leaving invariant the subset of
five singular elements. It is easy to see that in this case 
the action on $\langle Q', Q''\rangle$ must be trivial. 
Hence, $G$ fixes vertices $P_1,\dots,P_5$ of five $G$-stable quadratic cones $Q_i\in\langle Q', Q''\rangle$.
Since these points $P_1,\dots,P_5$ generate $\PP^5$, $G$ must be abelian,
a contradiction.
\end{case*}

\begin{case*}{\bf Case $d=5$.}
A del Pezzo surface of degree $5$ is unique up to isomorphism.
Consider the (faithful) action of $G$ on the space $\Pic(X)\otimes \CC$ and on 
the orthogonal complement $K_X^\perp\subset \Pic(X)\otimes \CC$.
The intersection form induces a non-degenerate quadratic form on $K_X^\perp$.
Hence $G$ faithfully acts on a two-dimensional quadric in $\PP^3$.
Then the only possibility is $G\simeq \Alt_5$. 
One can see that $\Aut(X)$ isomorphic to the symmetric group $\Sym_5$ and so 
a del Pezzo surface of degree $5$ admits an action of $\Alt_5$.
We get the case \ref{theorem-2-dimensional-simple-5}.
\end{case*}

\begin{case*}{\bf Case $6\le d \le 8$.}
Then the action of $G$ on $\Pic(X)\simeq \ZZ^{10-d}$ 
must be trivial. This contradicts the minimality assumption $\rk\Pic(X)^G=1$. 
\end{case*}

\begin{case*}{\bf Case $d=9$.}
Then $X =\PP^2$. So $G \subset \PGL_3( \CC)$, and 
by the classification of finite subgroups in $\PGL_2( \CC)$
we get the case \ref{theorem-2-dimensional-simple-P2}. 
\end{case*}
Theorem \ref{theorem-2-dimensional-simple} is proved.
\end{proof}

\begin{theorem}[{\cite{Dolgachev-Iskovskikh}}, {\cite{Tsygankov2013}}]
\label{theorem-2-dimensional}
Let $G\subset \Cr_2(\CC)$ be a finite quasi-simple non-simple subgroup.
Then $G\simeq 2.\Alt_5$.
\end{theorem}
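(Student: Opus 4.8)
The plan is to combine the classification of finite simple subgroups of $\Cr_2(\CC)$ from Theorem \ref{theorem-2-dimensional-simple} with a reduction modulo the center, and then to rule out every candidate except $2.\Alt_5$ by inspecting minimal rational $G$-surfaces degree by degree. First I would regularize the action: replacing $G\subset\Cr_2(\CC)$ by a biregular model I may assume $G$ acts faithfully on a smooth projective rational surface $X$, and after a $G$-equivariant minimal model program that $X$ is a minimal $G$-surface, i.e.\ either a del Pezzo surface with $\rk\Pic(X)^G=1$ or a $G$-conic bundle with $\rk\Pic(X)^G=2$. To pin down the isomorphism type of $G$, I would pass to the quotient $Y:=X/\z(G)$, $G_1:=G/\z(G)$. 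The action of $G_1$ on $Y$ is faithful: if $\bar g\in G_1$ fixed every $\z(G)$-orbit, then on the locus where $\z(G)$ acts freely one would have $g\cdot x=z_0 x$ for a single $z_0\in\z(G)$, forcing $g\in\z(G)$. Hence after an equivariant resolution $G_1$ embeds in $\Cr_2(\CC)$ and is simple, so by Theorem \ref{theorem-2-dimensional-simple} $G_1\in\{\Alt_5,\PSL_2(7),\Alt_6\}$; since the Schur multipliers are $\mumu_2,\mumu_2,\mumu_6$ (exactly as in the proof of Proposition \ref{Proposition-list-intro}), the non-simple quasi-simple $G$ must be one of
\[
2.\Alt_5,\quad \SL_2(7)=2.\PSL_2(7),\quad 2.\Alt_6,\quad 3.\Alt_6,\quad 6.\Alt_6 .
\]
It then remains to exclude $\SL_2(7)$ and $n.\Alt_6$ with $n=2,3,6$.

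The conic bundle case I expect to dispose of uniformly. The action on the base gives a homomorphism $G\to\Aut(\PP^1)=\PGL_2(\CC)$ whose image is a perfect subgroup of $\PGL_2(\CC)$, hence trivial or $\Alt_5$; the latter is impossible because none of the groups above has $\Alt_5$ as a quotient, so $G$ acts fibrewise and embeds into $\PGL_2$ of the generic fibre, which again is impossible since none of them is a subgroup of $\PGL_2(\CC)$.

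The bulk of the work is the del Pezzo case, treated by the degree $d=K_X^2$. For $6\le d\le 9$ the group $\Aut(X)$ is either a torus extension of a small finite group (for $d\le 8$) or $\PGL_3(\CC)$ (for $d=9$), and admits no perfect subgroup mapping onto $\Alt_6$ or $\PSL_2(7)$; for $d=9$ this is exactly the statement that none of $\SL_2(7),n.\Alt_6$ has a faithful $3$-dimensional projective representation, which follows from Corollary \ref{corollary-representations} together with the fact that in any representation the center acts by scalars on each irreducible constituent. For $d\le 5$ the group $\Aut(X)$ is finite and acts faithfully on $\Pic(X)$, hence embeds into the Weyl group $\W(\E_{9-d})$; since $7\mid|\SL_2(7)|$ but $7\nmid|\W(\A_4)|,|\W(\D_5)|,|\W(\E_6)|$, the group $\SL_2(7)$ is excluded for $d=3,4,5$ outright, while the orders $720,1080,2160$ of the groups $n.\Alt_6$ fail to divide $|\W(\A_4)|=120$ and $|\W(\D_5)|=1920$, excluding $d=4,5$. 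This leaves the low-degree surfaces $d=1,2,3$.

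The remaining, and hardest, cases are the low-degree del Pezzo surfaces. Degree $1$ forces a $G$-fixed point and hence $G\hookrightarrow\GL_2(\CC)$, which is impossible. For degree $2$ I would use that $\Aut(X)=\mumu_2\times\Aut(B)$, where $B\subset\PP^2$ is the branch quartic and the $\mumu_2$ is the Geiser involution, so that any perfect $G$ lands in $\{1\}\times[\Aut(B),\Aut(B)]\subset\PGL_3(\CC)$, again excluded by the non-existence of a faithful $3$-dimensional projective representation. The genuinely delicate case is the cubic surface $d=3$, where $G\hookrightarrow\PGL_4(\CC)$ preserving a smooth cubic: here $2.\Alt_6$ and $6.\Alt_6$ already fail to embed in $\PGL_4(\CC)$ at all, but $3.\Alt_6$ does embed (via $V\oplus\CC$ with $V$ its $3$-dimensional representation), and the main obstacle I anticipate is to show by a representation-theoretic computation that no $3.\Alt_6$-semi-invariant cubic form on $V\oplus\CC$ defines a smooth surface — this is where the argument is least formal, and it is the crux of the whole proof. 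Assembling these exclusions leaves only $G\simeq 2.\Alt_5$, which indeed occurs (e.g.\ $\SL_2(5)=2.\Alt_5\subset\GL_2(\CC)\subset\Cr_2(\CC)$), completing the proof.
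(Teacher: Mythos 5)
Your proposal reaches the correct classification and its skeleton (regularize to a minimal $G$-surface, pass to $G/\z(G)$ and invoke Theorem~\ref{theorem-2-dimensional-simple}, use Schur multipliers to list the candidates, then kill $\SL_2(7)$ and $n.\Alt_6$) coincides with the paper's; but the exclusion step is done by a genuinely different method. The paper argues uniformly in the del Pezzo case: it picks a central cyclic subgroup $Z\subset\z(G)$ of prime order and studies its fixed locus $X^Z$. If $X^Z$ is finite, the holomorphic Lefschetz formula bounds the number of fixed points, a perfect group cannot permute so few points, so they are $G$-fixed and $G\hookrightarrow\GL_2(\CC)$, forcing $2.\Alt_5$; if $X^Z$ contains a curve $C$, minimality gives $C\sim -aK_X$, very ampleness of $-K_X$ forces $K_X^2\le 2$, degree $1$ is handled by the base point of $|-K_X|$, and degree $2$ by the character $\lambda:\Aut(X)\to\{\pm 1\}$ with $\lambda(\gamma)=-1$ on the Geiser involution, which no perfect group can contain. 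Your route instead goes degree by degree through $\Aut(X)$: Weyl-group order divisibility for $3\le d\le 5$, the Hurwitz bound and the Geiser splitting for $d=2$, the base point for $d=1$, and projective-representation theory for $d=9$ and the cubic surface. This buys more elementary arguments in several degrees (pure order divisibility in $\W(\A_4)$, $\W(\D_5)$, $\W(\E_6)$), at the cost of fragmenting the proof and requiring facts about automorphism groups of individual del Pezzo surfaces that the paper's fixed-locus argument never needs.

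There is, however, one genuine incompleteness: the case you yourself call the crux --- $3.\Alt_6$ acting on a cubic surface --- is only announced, not proved. It does close, and quickly: any $3.\Alt_6$-semi-invariant cubic on $\PP(V\oplus\CC)$ is invariant (the group is perfect, so has no nontrivial characters), and under $\operatorname{Sym}^3(V^*\oplus\CC t)=\operatorname{Sym}^3V^*\oplus\operatorname{Sym}^2V^*\cdot t\oplus V^*\cdot t^2\oplus\CC t^3$ the central $\mumu_3$ acts by $\omega$ and $\omega^2$ on the two middle summands, while $(\operatorname{Sym}^3V^*)^{\Alt_6}=0$ because the Valentiner action on $\PP(V)$ has no invariant of degree $<6$; hence the only invariant cubic is $t^3=0$, which is not even reduced, let alone smooth. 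Until this computation is supplied, the argument excludes $3.\Alt_6$ nowhere in degree $3$. Two smaller cautions: in the conic-bundle case, the phrase ``none of the groups above has $\Alt_5$ as a quotient'' is true only for the four groups being excluded ($2.\Alt_5$ obviously has $\Alt_5$ as a quotient), so state it that way; and your appeals to ``no faithful $3$- (or $4$-) dimensional projective representation'' need the standard intermediate step that a perfect subgroup of $\PGL_n(\CC)$ is the image of a perfect central extension of it sitting in $\GL_n(\CC)$ (a quotient of $6.\Alt_6$, resp.\ of $\SL_2(7)$), since an embedding into $\PGL_n$ need not lift to a linear representation of $G$ itself; with that step your conclusions in degrees $9$, $2$ and $3$ are correct, and in particular $V\oplus\CC$ is indeed the only way $3.\Alt_6$ sits in $\PGL_4(\CC)$.
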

\begin{proof}
As in the proof of Theorem \ref{theorem-2-dimensional-simple} we may assume that 
$G$ faithfully acts on a smooth projective rational surface $X$ which is either 
a del Pezzo surface with $\rk \Pic(X)^G=1$ or an equivariant conic bundle with 
$\rk \Pic(X)^G=2$.  Let $\bar G:=G/\z(G)$. Then $\bar G$ is a simple group 
acting on a rational surface $X/\z(G)$. Hence $\bar G$ is embeddable to 
$\Cr_2(\CC)$ and by Theorem \ref{theorem-2-dimensional-simple} we have $\bar 
G\simeq \Alt_5$, $\Alt_6$, or $\PSL_2(7)$. Therefore, as in the proof of 
Proposition \ref{Proposition-list-intro} we have one of the following 
possibilities: $G\simeq 2.\Alt_5$, $\SL_2(7)$, or $n.\Alt_6$ for $n=2$, $3$, or 
$6$. If $X$ has an equivariant conic bundle structure $\pi: X\to B\simeq \PP^1$, 
then $G$ non-trivially acts  either on the base $B$ or on the general fiber. 
This is possible only if $G\simeq 2.\Alt_5$.

Assume that $X$ is a del Pezzo surface with $\rk \Pic(X)^G=1$. 
Let $Z\subset \z(G)$ be a cyclic subgroup of prime order $p$
and let $\pi: X\to Y:=X/Z$ be the quotient.
The surface $Y$ is rational and $\bar G:= G/Z$ faithfully acts on $Y$.

First, consider the case where $Z$ has only isolated fixed points. If $p=2$, then by the holomorphic
Lefschetz formula the number of fixed points equals $4$.
These points cannot be permuted by $G$, so they are fixed by $G$.
Similarly, in the case $p=3$ denote by $n_0$ (resp., $n_1$, $n_2$) 
the number of fixed points with action of type $\frac 13(1,-1)$
(resp., $\frac 13(1,1)$, $\frac 13(-1,-1)$).
Then again by the holomorphic Lefschetz formula
$n_1=n_2$, $n_0+n_1=3$. Hence there are at most three points of each type
and so, as above, all these points are fixed by $G$.
Since the groups $\SL_2(7)$ and $n.\Alt_6$ cannot act faithfully on the
tangent space to a fixed point, the only possibility is $G\simeq 2.\Alt_5$. 

Now consider the case where the fixed point locus $X^Z$ of $Z$
is one-dimensional.
Let $C$ be the union of all the curves in $X^Z$.
Note that $C$ is smooth because it is 
a part of the fixed point locus.
Clearly, $C$ is  $G$-invariant.
Then the class of $C$ must be proportional to $-K_X$
in $\Pic(X)$. Hence $C$ is ample and  connected.
Since $C$ is smooth, it is irreducible.
The group $G/Z$ non-trivially acts on $C$. Hence $C$ cannot be an elliptic curve.
Moreover, if $C$ is rational, then $G/Z\simeq \Alt_5$ and we are done.
Thus we can write $C\sim -aK_X$ with $a>1$.
If $-K_X$ is very ample, then $C$ is contained in 
a hyperplane section and $a=1$, a contradiction.
Thus it remains to consider only two possibilities: $K_X^2=1$ and $2$.
If $K_X^2=1$, then 
the anticanonical linear system has a unique base point, say $O$.  
Since, the representation of $G$ in the tangent space $T_{O,X}$ is faithful, 
the only possibility is $G\simeq 2.\Alt_5$.
Finally assume that $K_X^2=2$.
Then the anticanonical map is a double cover 
$\Phi_{|-K_X|}: X\to \PP^2$. The action of $Z$ of $\PP^2$ must be trivial
(otherwise $a=1$). Hence $p=2$, $Z$ is generated by the Geiser involution $\gamma$,
and $C$ is the ramification curve of $\Phi_{|-K_X|}$.
On the other hand, there exists the following homomorphism
\[
\lambda: \Aut(X) \hookrightarrow \GL(\Pic(X))=\GL_8(\ZZ)\overset{\det}\longrightarrow \{\pm 1\},
\]
where $\lambda(\gamma)=-1$.
Since our group $G$ is perfect, $\gamma\notin G$, a contradiction.
\end{proof}
\begin{remark}
In the same   manner one can describe the actions of $G=2.\Alt_5$
on rational surfaces, i.e. the embeddings $2.\Alt_5\hookrightarrow \Cr_2(\CC)$),
see \cite{Tsygankov2013} for details.
\end{remark}
\par\medskip\noindent
\textbf{Acknowledgements.} I would like to thank the referee for 
careful reading and numerous helpful comments and suggestions. 

\newcommand{\etalchar}[1]{$^{#1}$}
\def\cprime{$'$}

\end{document}